\newtheorem{theorem}{Theorem}[section]
\newtheorem{lemma}[theorem]{Lemma}
\newtheorem{corollary}[theorem]{Corollary}
\theoremstyle{definition}
\newtheorem{remark}[theorem]{Remark}
\author{A. G. Aksoy, J. M. Almira}
\title{ On Montel and Montel-Popoviciu theorems in several variables}
\begin{document}

\begin{abstract}
We present an elementary proof of a general version of Montel's theorem in several variables which is based on the use of tensor product polynomial interpolation. We also prove a Montel-Popoviciu's type theorem for functions $f:\mathbb{R}^d\to\mathbb{R}$ for $d>1$. Furthermore, our proof of this result is also valid for the case $d=1$, differing in several points from Popoviciu's original proof. Finally, we demonstrate that our results are optimal.
\end{abstract}

\subjclass[2010]{Primary 47B39; Secondary 39B22.}

\keywords{Montel's Theorem, Montel-Popoviciu's Theorem, Difference operators, Polynomials, Fr\'{e}chet's functional equation, Regularity, Polynomial Interpolation}





\maketitle


\section{Introduction}
The study of functional equations has substantially grown in the last three decades \cite{A_D}, \cite{B_J}, \cite{jarai}, \cite{J_Laszlo}, \cite{pales}, \cite{R_B}. Research in this area is provoking interesting questions concerning characterizations of polynomials \cite{AK_CJM}, \cite{L} and exponential polynomials \cite{laszlo1}, \cite{laszlo_mona}. These concrete questions have close connections to spectral analysis and synthesis and  have found its way to a number of interesting applications \cite{laszlo1}, \cite{laszlo_discrete}, \cite{laszlo_harm}. Both Montel and Popoviciu, in their seminal papers \cite{montel_1935}, \cite{montel} and \cite{popoviciu}, used Fr\'{e}chet functional equation, with some additional regularity conditions, for the characterization of polynomials. In this paper, we are interested in generalizing these results, by using some new tools, in the several variable setting. Our aim is  to show that under suitable conditions we again end up with polynomials.

Concretely, we are interested in a special regularity result for the the functional equation $\Delta_h^{m+1}f(x)=0$, where $f:\mathbb{R}^d\to\mathbb{R}$ and the higher differences operator $\Delta^{m+1}_h$  is inductively defined by $\Delta_h^1f(x)=f(x+h)-f(x)$, and $\Delta_h^{n+1}f(x)=\Delta_h(\Delta_h^nf)(x)$, $n=1,2,\cdots$. This equation was introduced in the literature for functions $f:\mathbb{R}\to\mathbb{R}$,  by M. Fr\'{e}chet in 1909 as a particular case of the functional equation
\begin{equation}\label{fre}
\Delta_{h_1h_2\cdots h_{m+1}}f(x)=0 \ \ (x,h_1,h_2,\dots,h_{m+1}\in \mathbb{R}),
\end{equation}
where $f:\mathbb{R}\to\mathbb{R}$ and $\Delta_{h_1h_2\cdots h_s}f(x)=\Delta_{h_1}\left(\Delta_{h_2\cdots h_s}f\right)(x)$, $s=2,3,\cdots$. In particular, after Fr\'{e}chet's
seminal paper \cite{frechet}, the solutions of \eqref{fre} are named ``polynomial functions'' by the functional equations community, since it is known that, under very mild regularity conditions on $f$, if $f:\mathbb{R}\to\mathbb{R}$ satisfies \eqref{fre}, then $f(x)=a_0+a_1x+\cdots a_{m}x^{m}$ for all $x\in\mathbb{R}$ and certain constants $a_i\in\mathbb{R}$. For example, in order to have this property, it is enough for $f$ being  bounded on a set $A\subseteq \mathbb{R}$ of positive Lebesgue measure $|A|>0$ (see, for example, \cite{laszlo1} for a proof of this result). Equation \eqref{fre} can  also be studied for functions $f:X\to Y$  whenever $X, Y$ are two  $\mathbb{Q}$-vector spaces and the variables $x,h_1,\cdots,h_{m+1}$ are assumed to be elements of $X$:
\begin{equation}\label{fregeneral}
\Delta_{h_1h_2\cdots h_{m+1}}f(x)=0 \ \ (x,h_1,h_2,\dots,h_{m+1}\in X).
\end{equation}
In this context, the general solutions of \eqref{fregeneral} are characterized as functions of the form $f(x)=A_0+A_1(x)+\cdots+A_m(x)$, where $A_0$ is a constant and $A_k(x)=A^k(x,x,\cdots,x)$ for a certain $k$-additive symmetric function $A^k:X^k\to Y$ (we say that $A_k$ is the diagonalization of $A^k$). In particular, if $x\in X$ and $r\in\mathbb{Q}$, then $f(rx)=A_0+rA_1(x)+\cdots+r^mA_m(x)$. Furthermore, it is known that $f:X\to Y$ satisfies \eqref{fregeneral} if and only if it satisfies
\begin{equation}\label{frepasofijo}
\Delta_{h}^{m+1}f(x):=\sum_{k=0}^{m+1}\binom{m+1}{k}(-1)^{m+1-k}f(x+kh)=0 \ \ (x,h\in X).
\end{equation}
A proof of this fact follows directly from Djokovi\'{c}'s Theorem \cite{Dj} (see also \cite[Theorem 7.5, page 160]{HIR}, \cite[Theorem 15.1.2., page 418]{kuczma} and, for a completely different new proof, \cite{laszlo_mona}).

In 1935 P. Montel  \cite{montel_1935} studied Fr\'{e}chet's functional equation from a fresh perspective (see also \cite{montel}). Indeed, he was not motivated by Fr\'{e}chet's paper but by a much older one by Jacobi \cite{Jacobi}, who  in 1834 proved that  if $f:\mathbb{C}\to\widehat{\mathbb{C}}$ is a non-constant meromorphic function defined on the complex numbers, then  $\mathfrak{P}_0(f)=\{w\in\mathbb{C}:f(z+w)=f(z) \text{ for all } z\in\mathbb{C}\}$, the set of periods of $f$,  is a discrete subgroup of $(\mathbb{C},+)$. This reduces the possibilities to the following three cases: $\mathfrak{P}_0(f)=\{0\}$, or $\mathfrak{P}_0(f)=\{nw_1:n\in\mathbb{Z}\}$ for a certain complex number $w_1\neq 0$, or $\mathfrak{P}_0(f)=\{n_1w_1+n_2w_2:(n_1,n_2)\in \mathbb{Z}^2\}$ for certain complex numbers $w_1,w_2$ satisfying $w_1,w_2\neq 0$ and $w_1/w_2\not\in\mathbb{R}$. In particular, these functions cannot have three independent periods and there exist meromorphic functions  $f:\mathbb{C}\to\widehat{\mathbb{C}}$  with two independent periods $w_1,w_2$ as soon as $w_1/w_2\not\in\mathbb{R}$. These functions are called doubly periodic (or elliptic) and have an important role in complex function theory \cite{JS}.  Analogously, if the function $f:\mathbb{R}\to\mathbb{R}$ is continuous and non-constant, it does not admit  two
$\mathbb{Q}$-linearly independent periods.  Obviously,  Jacobi's theorem can be formulated as a result which characterizes the constant functions as those meromorphic  functions $f:\mathbb{C}\to\widehat{\mathbb{C}}$  which solve a system of functional equations of the form
\begin{equation}\label{JC}
\Delta_{h_1}f(z)=\Delta_{h_2}f(z)=\Delta_{h_3}f(z)=0 \ \ (z\in \mathbb{C}),
\end{equation}
for three independent periods $\{h_1,h_2,h_3\}$ (i.e., $h_1\mathbb{Z}+h_2\mathbb{Z}+h_3\mathbb{Z}$ is a dense subset of  $\mathbb{C}$). For the real case, the result states that, if $h_1,h_2\in\mathbb{R}\setminus\{0\}$ are two nonzero real numbers and $h_1/h_2\not\in\mathbb{Q}$,  the continuous function $f:\mathbb{R}\to\mathbb{R}$ is a constant function if and only if it solves the system of functional equations
\begin{equation} \label{JR}
\Delta_{h_1}f(x)=\Delta_{h_2}f(x)= 0\ (x\in \mathbb{R}).
\end{equation}
In \cite{montel_1935}, \cite{montel} Montel  substituted  $\Delta^{m+1}_h$ for $\Delta_h$ in the equations $(\ref{JC}),(\ref{JR})$ and proved that these equations are appropriate for the characterization of ordinary polynomials.
\begin{theorem}[Montel] \label{monteldimuno} Assume that $f:\mathbb{C}\to\mathbb{C}$ is an analytic function  which solves a system of functional equations of the form
\begin{equation}\label{JCM}
\Delta_{h_1}^{m+1}f(z)=\Delta_{h_2}^{m+1}f(z)=\Delta_{h_3}^{m+1}f(z)=0 \ \ (z\in \mathbb{C})
\end{equation}
for three independent periods $\{h_1,h_2,h_3\}$. Then $f(z)=a_0+a_1z+\cdots+a_mz^m$ is an ordinary polynomial with complex coefficients and degree $\leq m$. Furthermore, if $\{h_1,h_2\}\subset \mathbb{R}\setminus \{0\}$ satisfy $h_1/h_2\not\in\mathbb{Q}$,
 the continuous function $f:\mathbb{R}\to\mathbb{R}$ is an ordinary polynomial with real coefficients and degree $\leq m$ if and only if it solves the system of functional equations
\begin{equation} \label{JRM}
\Delta_{h_1}^{m+1}f(x)=\Delta_{h_2}^{m+1}f(x)= 0\ (x\in \mathbb{R}).
\end{equation}
\end{theorem}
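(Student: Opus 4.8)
The plan is to reduce both assertions to Fr\'{e}chet's equation \eqref{frepasofijo} on the whole space, by combining purely algebraic manipulations of the difference operators with the density of the group generated by the periods, and only afterwards invoking the regularity result quoted in the Introduction. Throughout I would work with the shift operators $E_hf(x)=f(x+h)$, so that $\Delta_h=E_h-I$; since translations commute, the family $\{\Delta_h\}_h$ is commutative and $E_{a+b}=E_aE_b$. The key point to anticipate is that this algebra will raise the \emph{order} of the equation, so the sharp degree $\le m$ must be recovered at the very end.

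First I would record two elementary operator lemmas. For scaling, from $\Delta_{na}=E_a^{\,n}-I=(I+\Delta_a)^n-I=\sum_{k\ge 1}\binom{n}{k}\Delta_a^{k}$, valid for every $n\in\mathbb{Z}$ (reading $E_a^{-1}$ for the inverse shift), one sees that $\Delta_{na}^{m+1}$ is a commuting polynomial in $\Delta_a$ in which every monomial carries $\Delta_a$ to a power $\ge m+1$; hence $\Delta_a^{m+1}f=0$ forces $\Delta_{na}^{m+1}f=0$. For mixing, from $\Delta_{a+b}=E_aE_b-I=\Delta_a\Delta_b+\Delta_a+\Delta_b$ I expand $\Delta_{a+b}^{\,p+q-1}$: a monomial obtained by choosing $i$ factors $\Delta_a\Delta_b$, $j$ factors $\Delta_a$ and $k$ factors $\Delta_b$ with $i+j+k=p+q-1$ carries $\Delta_a$ to the power $i+j$ and $\Delta_b$ to the power $i+k$, whose sum is $i+(p+q-1)\ge p+q-1$; were both exponents below threshold the sum would be at most $p+q-2$, so either the exponent of $\Delta_a$ is $\ge p$ or that of $\Delta_b$ is $\ge q$. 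Thus $\Delta_a^{p}f=\Delta_b^{q}f=0$ implies $\Delta_{a+b}^{\,p+q-1}f=0$.

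Applying scaling in each direction and iterating mixing, I obtain a single exponent $M$ (namely $M=2m+1$ for the two real periods and $M=3m+1$ after combining all three complex periods) such that $\Delta_g^{M}f=0$ for every $g$ in the subgroup $G$ generated by the periods. By hypothesis $G$ is dense: Kronecker's theorem gives density of $h_1\mathbb{Z}+h_2\mathbb{Z}$ in $\mathbb{R}$ from $h_1/h_2\notin\mathbb{Q}$, while density of $h_1\mathbb{Z}+h_2\mathbb{Z}+h_3\mathbb{Z}$ in $\mathbb{C}$ is exactly the independence of the periods. For fixed $x$ the map $h\mapsto\Delta_h^{M}f(x)$ is a finite combination of translates of $f$; in the real case it is continuous and vanishes on the dense set $G$, hence vanishes identically, and in the complex case it is an entire function of $h$ vanishing on a set with accumulation points, so it vanishes identically by the identity theorem. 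Either way $\Delta_h^{M}f=0$ for all $h$, i.e.\ $f$ solves \eqref{frepasofijo} with $m$ replaced by $M-1$. In the real case, continuity together with this equation places us in the situation of the regularity theorem quoted in the Introduction, so $f$ is an ordinary polynomial; in the complex case one argues even more directly, since $\lim_{h\to 0}\Delta_h^{M}f(z)/h^{M}=f^{(M)}(z)$ forces $f^{(M)}\equiv 0$, so the entire function $f$ is a polynomial. Finally, once $f$ is known to be a polynomial, the single remaining hypothesis $\Delta_{h_1}^{m+1}f=0$ with $h_1\ne 0$ sharpens the conclusion: as $\Delta_{h_1}$ lowers the degree of a nonconstant polynomial by exactly one, $\Delta_{h_1}^{m+1}f=0$ is possible only if $\deg f\le m$. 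The converse implication, that every polynomial of degree $\le m$ is annihilated by each $\Delta_{h_j}^{m+1}$, follows from the same degree count.

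The genuinely delicate point is more conceptual than computational. The mixing lemma inflates the order of the equation from $m+1$ to $M=2m+1$ or $3m+1$, so the density argument only delivers Fr\'{e}chet's equation of the inflated order $M$, and hence a priori a polynomial of degree as large as $M-1$. The step I would flag as the crux is that the sharp bound $\deg f\le m$ is recovered a posteriori, by feeding the established polynomiality back into one of the original hypotheses $\Delta_{h_j}^{m+1}f=0$; the other place demanding care is keeping track of exactly which regularity (continuity in the real case, analyticity in the complex case) is used to pass from the dense subgroup $G$ to the whole space.
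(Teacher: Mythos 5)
Your proof is correct, but it takes a genuinely different route from the paper's. The paper never argues by direct operator algebra; it obtains Theorem \ref{monteldimuno} as the one-dimensional instance of the Section 2 machinery: Lemma \ref{MP_lem1} uses tensor-product Lagrange interpolation to produce a polynomial $P_{a,\gamma}\in\Pi_{m,\max}^s$ that agrees with $f$ on the entire lattice $a+i_1h_1+\cdots+i_sh_s$, $(i_1,\dots,i_s)\in\mathbb{Z}^s$, whence $\Delta_h^{sm+1}f=0$ for every $h$ in the generated subgroup (Corollary \ref{nuevo_montel_0}); density plus continuity then yields Fr\'{e}chet's equation of order $sm+1$ everywhere, and Fr\'{e}chet's theorem concludes (Corollary \ref{nuevo_montel}, Theorem \ref{MG}). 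Your mixing lemma --- every monomial in the expansion of $(\Delta_a\Delta_b+\Delta_a+\Delta_b)^{p+q-1}$ carries $\Delta_a$ to a power $\geq p$ or $\Delta_b$ to a power $\geq q$ --- reaches exactly the same intermediate statement, $\Delta_g^{M}f=0$ on the dense subgroup with $M=2m+1$ (two periods) or $3m+1$ (three periods), i.e.\ precisely the exponent $sm+1$ of Corollary \ref{nuevo_montel_0}, by Djokovi\'{c}-style combinatorics instead of interpolation; from there the two arguments merge (density and continuity, resp.\ the identity theorem applied to the entire function $h\mapsto\Delta_h^Mf(z)$, then regularity, then the a posteriori degree sharpening by feeding $\Delta_{h_1}^{m+1}f=0$ back into the polynomial, which is the same device the paper uses in Part II of Theorem \ref{M_T} to pass from $\Pi_{dm,\max}^d$ to $\Pi_{m,\max}^d$). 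What the paper's interpolation approach buys is uniformity: it runs verbatim over arbitrary Abelian and topological Abelian groups and feeds directly into the distributional version of Theorem \ref{MG}; what your approach buys is a shorter, self-contained argument in one variable and a cleaner complex case, where analyticity lets you bypass Fr\'{e}chet regularity entirely via $\lim_{h\to 0}\Delta_h^Mf(z)/h^M=f^{(M)}(z)\equiv 0$. One small repair: for negative $n$ your scaling identity $(I+\Delta_a)^n-I$ is not a finite expansion in $\Delta_a$; use instead $\Delta_{-a}^{m+1}=(-1)^{m+1}E_{-(m+1)a}\Delta_a^{m+1}$, which gives $\Delta_{na}^{m+1}f=0$ for all $n\in\mathbb{Z}$ at once and closes that gap without changing anything downstream.
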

To differentiate the relationship between his theorem and Jacobi's results, Montel named ``generalized periods'' of order $m+1$ of $f$ to the vectors $h$ such that
$\Delta_{h}^{m+1}f= 0$.

Montel's result uses the  regularity properties of Fr\'{e}chet's functional equation in a new non-standard form. Indeed, the idea is now not to conclude the regularity of $f$ from the assumption that it solves the equation for all  $h$ and it satisfies some mild regularity condition, but to assume that $f$ is globally regular (indeed, it is continuous everywhere) and to describe the minimal set  $\Gamma$ of generalized periods $h$ such that, if  the equation is solved for all $h\in \Gamma$, then it is also solved for all $h$. Thus, the regularity of the solution is assumed and used to conclude that, in order to be a solution of Fr\'{e}chet's equation it is enough to have a very small set of generalized periods.

In his paper \cite{montel}, Montel also studied the equation $\eqref{frepasofijo}$ for $X=\mathbb{R}^d$, with $d>1$,  and $f:\mathbb{R}^d\to\mathbb{C}$ continuous, and for $X=\mathbb{C}^d$ and $f:\mathbb{C}^d\to \mathbb{C}$ analytic. Concretely, he stated (and gave a proof for $d=2$) the following result.
\begin{theorem}[Montel's Theorem in several variables] \label{montelvvcf}  Let $\{h_1,\cdots,h_{s}\}\subset \mathbb{R}^d$  be such that
\begin{equation} \label{perm}
h_1\mathbb{Z}+h_2\mathbb{Z}+\cdots+h_{s}\mathbb{Z} \text{ is a dense subset of } \mathbb{R}^d,
\end{equation}
and let $f\in C(\mathbb{R}^d,\mathbb{C})$ be such that $\Delta_{h_k}^m(f) =0$, $k=1,\cdots,s$. Then  $f(x)=\sum_{|\alpha|<N}a_{\alpha}x^{\alpha}$ for some $N\in\mathbb{N}$, some complex numbers $a_{\alpha}$, and all $x\in\mathbb{R}^d$. Thus, $f$ is an ordinary complex-valued polynomial in $d$ real variables.

Consequently, if $d=2k$, $\{h_i\}_{i=1}^{s}$ satisfies $\eqref{perm}$, the function  $f:\mathbb{C}^k\to \mathbb{C}$ is holomorphic and $\Delta_{h_k}^m(f) =0$, $k=1,\cdots,s$, then  $f(z)=\sum_{|\alpha|<N}a_{\alpha}z^{\alpha}$ is an ordinary complex-valued polynomial in $k$ complex variables.
\end{theorem}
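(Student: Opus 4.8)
The plan is to reduce the assertion to the single-step Fréchet equation \eqref{frepasofijo} holding for \emph{every} increment $h\in\R^d$, and then to invoke the regularity result quoted in the Introduction, namely that continuous solutions of \eqref{frepasofijo} are ordinary polynomials. The bridge between these two is an elementary continuity observation: for each fixed $x$ the map $h\mapsto \Delta_h^{N}f(x)=\sum_{k=0}^{N}\binom{N}{k}(-1)^{N-k}f(x+kh)$ is continuous, being a finite combination of continuous functions. Hence, if I can show that $\Delta_h^{N}f\equiv 0$ for all $h$ in the dense subgroup $\Lambda=h_1\Z+\cdots+h_s\Z$ and for a single exponent $N$, then density together with continuity in $h$ upgrades this to $\Delta_h^{N}f\equiv 0$ for all $h\in\R^d$, which is exactly \eqref{frepasofijo}.

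To produce such an $N$, I would exploit the tensor product interpolation idea. Fix an arbitrary base point $x\in\R^d$ and consider the values of $f$ on the grid $\{x+\sum_{k}n_kh_k:(n_1,\dots,n_s)\in\Z^s\}$, i.e.\ the function $G(n_1,\dots,n_s)=f(x+\sum_k n_k h_k)$. The hypothesis $\Delta_{h_k}^{m}f\equiv 0$ says precisely that, for every fixing of the remaining variables, $n_k\mapsto G(\dots,n_k,\dots)$ has vanishing $m$-th unit-step difference on $\Z$, hence is a polynomial of degree $<m$ in $n_k$. A function on $\Z^s$ that is separately polynomial of degree $<m$ in each variable is jointly a tensor product polynomial: interpolating in $n_1$ at the nodes $0,1,\dots,m-1$ by Lagrange's formula and then recursing on $n_2,\dots,n_s$ yields
\[
f\Bigl(x+\sum_{k=1}^{s}n_k h_k\Bigr)=\sum_{0\le i_1,\dots,i_s<m}c_{i_1\cdots i_s}(x)\,n_1^{i_1}\cdots n_s^{i_s},
\]
a polynomial of degree $<m$ in each $n_k$ separately, with coefficients depending on $x$.

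Now take any $h=\sum_k p_kh_k\in\Lambda$ and substitute $n_k=kp_k$: the right-hand side becomes a polynomial in the single integer variable $k$ of degree at most $s(m-1)$. Therefore $k\mapsto f(x+kh)$ is a polynomial of degree $\le s(m-1)$, and so $\Delta_h^{N}f(x)=0$ with $N=s(m-1)+1$. Since $x$ was arbitrary and $h$ ranged over all of $\Lambda$, the continuity-and-density step described above gives $\Delta_h^{N}f\equiv 0$ for every $h\in\R^d$. By the equivalence of \eqref{frepasofijo} and \eqref{fregeneral} and the regularity of continuous solutions, $f$ is an ordinary polynomial $f(x)=\sum_{|\alpha|<N}a_\alpha x^\alpha$, which proves the first assertion. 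For the second, identify $\C^k$ with $\R^{2k}$: a holomorphic $f$ is continuous, hence already an ordinary polynomial in the $2k$ real coordinates by the first part, and holomorphy then forces the dependence on the conjugate variables to drop out, leaving $f(z)=\sum_{|\alpha|<N}a_\alpha z^\alpha$.

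I expect the main obstacle to be the bookkeeping in the ``separately polynomial $\Rightarrow$ jointly tensor polynomial'' step and the tracking of the degree through the substitution $n_k=kp_k$, since these are exactly what deliver a single finite exponent $N$ valid simultaneously for all $h\in\Lambda$; everything else is soft (continuity, density, and the cited Fréchet regularity). I would also take care that the final appeal to the regularity theorem is legitimate over $\R^d$, which it is, the quoted results being stated for arbitrary $\mathbb{Q}$-vector spaces under a mild regularity hypothesis that continuity amply satisfies.
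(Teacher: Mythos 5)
Your proposal is correct and follows essentially the same route as the paper's Section 2: tensor product interpolation shows that $f$ restricted to each coset of the lattice $h_1\mathbb{Z}+\cdots+h_s\mathbb{Z}$ agrees with a polynomial in the integer coordinates (your Lagrange-recursion derivation from separate polynomiality is just a repackaging of the paper's Lemma \ref{MP_lem1}, which extends the grid interpolant forward and backward using the difference equations), whence $\Delta_h^{N}f=0$ for all $h$ in the dense subgroup with $N=s(m-1)+1$ (the paper's $sm+1$ for the $\Delta^{m+1}$ normalization), and continuity plus density followed by Fr\'{e}chet's regularity theorem finishes the argument exactly as in Corollaries \ref{nuevo_montel_0} and \ref{nuevo_montel}. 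Your degree bookkeeping and the reduction of the holomorphic case to the real case are both sound, so there is nothing to repair.
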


The finitely generated subgroups of  $(\mathbb{R}^d,+)$ which are dense in $\mathbb{R}^d$ have been actively studied and, in fact, they can be characterized in several ways. For example, in \cite[Proposition 4.3]{W}, the following theorem is proved:
\begin{theorem}\label{subgruposdensos} Let $G=h_1\mathbb{Z}+h_2\mathbb{Z}+\cdots+h_{s}\mathbb{Z}$ be the additive subgroup of  $\mathbb{R}^d$ generated by the vectors $\{h_1,\cdots,h_{s}\}$. The following statements are equivalent:
\begin{itemize}
\item[$(i)$] $G$ is a dense subgroup of $\mathbb{R}^d$.
\item[$(ii)$] If $h_k=(a_{1k},a_{2k},\cdots,a_{dk})$ are the coordinates of $h_k$ with respect to the canonical basis of   $\mathbb{R}^d$ $(k=1,\cdots, s)$, then the matrices
\[
A(n_1,\cdots,n_s)=\left [
\begin{array}{cccccc}
a_{11} & a_{12} &  \cdots & a_{1s} \\
a_{21} & a_{22} & \cdots & a_{2s}\\
\vdots & \vdots & \ \ddots & \vdots \\
a_{d1} & a_{d2} &  \cdots &  a_{ds}\\
n_1 & n_2 & \cdots &  n_{s}
\end{array} \right].
\]
have rank equal to $d+1$, for all  $(n_1,\cdots,n_s)\in\mathbb{Z}^s\setminus\{(0,\cdots,0)\}$.
\end{itemize}
\end{theorem}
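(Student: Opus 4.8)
The plan is to prove the equivalence $(i)\Leftrightarrow(ii)$ by working with characters of $\mathbb{R}^d$ and using the standard duality criterion for density of subgroups. The key observation is that a subgroup $G\subseteq\mathbb{R}^d$ is dense if and only if every continuous character $\chi:\mathbb{R}^d\to\T$ that is trivial on $G$ is trivial everywhere. Since every continuous character of $\mathbb{R}^d$ has the form $\chi_\xi(x)=e^{2\pi i\langle\xi,x\rangle}$ for some $\xi\in\mathbb{R}^d$, density of $G$ is equivalent to the statement that the only $\xi\in\mathbb{R}^d$ with $\langle\xi,h_k\rangle\in\mathbb{Z}$ for all $k=1,\dots,s$ is $\xi=0$. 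Thus the whole proof reduces to translating this arithmetic condition on the $h_k$ into the rank condition $(ii)$.

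First I would establish the annihilator criterion. For the direction $(i)\Rightarrow$ density, I would note that $\overline{G}$ is a closed subgroup of $\mathbb{R}^d$; if it were proper, then the quotient $\mathbb{R}^d/\overline{G}$ would be a nontrivial locally compact abelian group admitting a nontrivial character, which pulls back to a nontrivial character of $\mathbb{R}^d$ vanishing on $G$. Conversely, if $G$ is dense, continuity forces any character trivial on $G$ to be trivial on all of $\mathbb{R}^d$. This gives the clean reformulation: $G$ is dense if and only if the system $\langle\xi,h_k\rangle\in\mathbb{Z}$ $(k=1,\dots,s)$ has only the solution $\xi=0$.

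Next I would connect this to the matrices $A(n_1,\dots,n_s)$. The condition $\langle\xi,h_k\rangle=\sum_{j=1}^d a_{jk}\xi_j\in\mathbb{Z}$ for each $k$ means there exist integers $n_k$ with $\sum_{j=1}^d a_{jk}\xi_j=-n_k$, i.e.\ the vector $(\xi_1,\dots,\xi_d,1)$ lies in the left null space of the matrix $A(n_1,\dots,n_s)$ whose first $d$ rows are the coordinate rows $(a_{j1},\dots,a_{js})$ and whose last row is $(n_1,\dots,n_s)$. A nonzero solution $\xi\neq 0$ therefore produces a nontrivial linear relation among the rows of $A(n_1,\dots,n_s)$ for some integer tuple $(n_1,\dots,n_s)$, forcing $\rank A(n_1,\dots,n_s)\le d$; and conversely a rank drop below $d+1$ for some nonzero integer tuple yields, after checking that the relation genuinely involves the last row with a nonzero coefficient, a nonzero $\xi$ solving the system. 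I expect the main obstacle to be this bookkeeping: one must verify that a rank-deficiency relation among the rows can always be normalized so that the coefficient of the last row is nonzero (otherwise the coordinate rows alone would be dependent, which relates to whether the $h_k$ span $\mathbb{R}^d$) and that the resulting $\xi$ is forced to be nonzero precisely when $(n_1,\dots,n_s)\neq(0,\dots,0)$. Handling this normalization carefully, together with verifying the density argument in the degenerate case where the $h_k$ fail to span $\mathbb{R}^d$ (where density trivially fails and some matrix already has rank $\le d$ with $n=0$ excluded), is where the care is needed.
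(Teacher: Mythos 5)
The paper itself contains no proof of Theorem \ref{subgruposdensos}: the result is quoted from Waldschmidt \cite[Proposition 4.3]{W} and used as a black box (only its consequence, Kronecker's theorem, Corollary \ref{Kro}, is derived in the text). So there is no internal argument to compare yours against; what can be said is that your duality proof is correct in outline and is essentially the standard proof of this criterion, in the spirit of the cited source. Your reduction is the right one: $G$ is dense if and only if its annihilator in $\widehat{\mathbb{R}^d}\cong\mathbb{R}^d$ is trivial, i.e.\ if and only if $\langle\xi,h_k\rangle\in\mathbb{Z}$ for $k=1,\dots,s$ forces $\xi=0$; and the two normalization issues you flag are exactly the points to settle, and both close without difficulty. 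If $(n_1,\dots,n_s)\in\mathbb{Z}^s\setminus\{0\}$ and $\operatorname{rank}A(n_1,\dots,n_s)\leq d$, take a nontrivial relation $c_1R_1+\cdots+c_dR_d+c_{d+1}(n_1,\dots,n_s)=0$ among the $d+1$ rows: if $c_{d+1}\neq 0$, normalizing $c_{d+1}=-1$ gives $\xi=(c_1,\dots,c_d)$ with $\langle\xi,h_k\rangle=n_k\in\mathbb{Z}$ for all $k$, and $\xi\neq 0$ since $\xi=0$ would force $(n_1,\dots,n_s)=0$; if $c_{d+1}=0$, then $\xi=(c_1,\dots,c_d)\neq 0$ is orthogonal to every $h_k$, so $\langle\xi,h_k\rangle=0\in\mathbb{Z}$ and the annihilator is again nontrivial (indeed $G$ lies in a hyperplane, so density fails outright). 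Conversely, a nonzero annihilating $\xi$ with $m_k=\langle\xi,h_k\rangle\in\mathbb{Z}$ puts $(\xi_1,\dots,\xi_d,-1)$ in the left kernel of $A(m_1,\dots,m_s)$, giving rank $\leq d$; the one case needing care is $m_1=\cdots=m_s=0$, which is excluded from $(ii)$, but then the top $d\times s$ block already has rank $\leq d-1$, so $A(n_1,\dots,n_s)$ has rank $\leq d$ for \emph{every} nonzero integer tuple, and $(ii)$ still fails. The only soft spot in your sketch is the appeal to a nontrivial character of the quotient $\mathbb{R}^d/\overline{G}$, which requires either Pontryagin duality (characters separate points on locally compact abelian groups) or, more elementarily, the structure theorem for closed subgroups of $\mathbb{R}^d$, namely $\overline{G}=V\oplus\Lambda$ with $V$ a linear subspace and $\Lambda$ a lattice in a complementary subspace, from which an annihilating $\xi\neq 0$ for a proper $\overline{G}$ can be exhibited by hand; with that reference (or construction) supplied and the case analysis above written out, your proposal is a complete and correct proof.
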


A simple case which has motivated the study of dense subgroups of  $(\mathbb{R}^d,+)$, is the following one:

\begin{corollary}[Kronecker's theorem] \label{Kro} Given $\theta_1,\theta_2,\cdots,\theta_d\in\mathbb{R}$, the group  $\mathbb{Z}^d+(\theta_1,\theta_2,\cdots,\theta_d)\mathbb{Z}$ (which is generated by exactly $d+1$ elements) is a dense subgroup of   $\mathbb{R}^d$ if and only if
\[
n_1\theta_1+\cdots+n_d\theta_d\not\in \mathbb{Z}, \text{ for all  }(n_1,\cdots,n_d)\in\mathbb{Z}^{d}\setminus \{(0,\cdots,0)\}
\]
in other words, this group is dense in  $\mathbb{R}^d$ if and only if the numbers $\{1,\theta_1,\cdots,\theta_d\}$ form a linearly independent system  over $\mathbb{Q}$.
\end{corollary}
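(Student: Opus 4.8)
The plan is to apply Theorem \ref{subgruposdensos} directly to the $s=d+1$ generators $h_1=e_1,\dots,h_d=e_d$ (the canonical basis of $\mathbb{R}^d$) together with $h_{d+1}=(\theta_1,\dots,\theta_d)$, where $e_i$ denotes the $i$-th canonical basis vector. For these generators the matrix $A(n_1,\dots,n_{d+1})$ of Theorem \ref{subgruposdensos} is a \emph{square} $(d+1)\times(d+1)$ matrix whose $k$-th column (for $k\le d$) is $(e_k,n_k)^{T}$ and whose last column is $(\theta_1,\dots,\theta_d,n_{d+1})^{T}$. Since the matrix is square, having rank $d+1$ is equivalent to having nonzero determinant, so the whole problem reduces to evaluating this determinant and reading off the resulting condition.

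Second, I would compute the determinant. Subtracting $n_i$ times the $i$-th row from the last row, for $i=1,\dots,d$, clears the first $d$ entries of the last row while leaving the top-left $d\times d$ identity block untouched; the matrix then becomes block upper triangular with the single remaining entry $n_{d+1}-(n_1\theta_1+\cdots+n_d\theta_d)$ in the bottom-right corner. Hence $\det A(n_1,\dots,n_{d+1})=n_{d+1}-(n_1\theta_1+\cdots+n_d\theta_d)$.

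Third, I would translate the rank criterion. By Theorem \ref{subgruposdensos}, the group is dense in $\mathbb{R}^d$ if and only if $n_{d+1}-(n_1\theta_1+\cdots+n_d\theta_d)\neq 0$ for every $(n_1,\dots,n_{d+1})\in\mathbb{Z}^{d+1}\setminus\{0\}$. I claim this is equivalent to requiring $n_1\theta_1+\cdots+n_d\theta_d\notin\mathbb{Z}$ for all $(n_1,\dots,n_d)\in\mathbb{Z}^{d}\setminus\{0\}$. For one direction, if some nonzero $(n_1,\dots,n_d)$ satisfied $n_1\theta_1+\cdots+n_d\theta_d=m\in\mathbb{Z}$, then the nonzero vector $(n_1,\dots,n_d,m)$ would annihilate the determinant. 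Conversely, if a nonzero $(n_1,\dots,n_{d+1})$ annihilates the determinant, then $(n_1,\dots,n_d)$ cannot vanish (else $n_{d+1}=0$ as well, contradicting nontriviality), and we obtain $n_1\theta_1+\cdots+n_d\theta_d=n_{d+1}\in\mathbb{Z}$ with $(n_1,\dots,n_d)\neq 0$.

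Finally, the condition that $n_1\theta_1+\cdots+n_d\theta_d\notin\mathbb{Z}$ for all nonzero integer tuples is exactly the assertion that no nontrivial integer relation $m_0+m_1\theta_1+\cdots+m_d\theta_d=0$ holds, which, after clearing denominators, is the same as the $\mathbb{Q}$-linear independence of $\{1,\theta_1,\dots,\theta_d\}$; this yields the reformulation in the statement. I do not expect a genuine obstacle, since the corollary is a direct specialization of Theorem \ref{subgruposdensos}: the only real computation is the determinant evaluation, and the single point demanding care is the bookkeeping in the logical equivalence, namely verifying in the converse direction that $(n_1,\dots,n_d)$ is forced to be nonzero.
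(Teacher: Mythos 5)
Your proposal is correct and takes essentially the same route as the paper: both specialize Theorem \ref{subgruposdensos} to the generators $e_1,\dots,e_d$ and $(\theta_1,\dots,\theta_d)$, reduce the rank condition on the square $(d+1)\times(d+1)$ matrix to the nonvanishing of the determinant $n_{d+1}-(n_1\theta_1+\cdots+n_d\theta_d)$ (the paper places the $\theta$-column first and expands by cofactors, getting the same quantity up to the sign $(-1)^d$, while you use row operations), and then pass to $\mathbb{Q}$-linear independence by clearing denominators. Your explicit verification that $(n_1,\dots,n_d)\neq 0$ is forced in the converse direction is a small bookkeeping point the paper leaves implicit, not a genuinely different argument.
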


\begin{proof} The vectors $\{e_k\}_{k=1}^d\cup\{(\theta_1,\cdots,\theta_d)\}$, where $$e_k=(0,0,\cdots,1^{(k\text{-th position})},0,\cdots,0), \ \ k=1,\cdots, d,$$ generate the  group $G=\mathbb{Z}^d+(\theta_1,\theta_2,\cdots,\theta_d)\mathbb{Z}$. Thus, part $(ii)$ from  Theorem \ref{subgruposdensos} guarantees that $G$ is dense in $\mathbb{R}^d$
if and only if
\begin{eqnarray*}
&\ & \det\left [
\begin{array}{cccccc}
\theta_{1} & 1 & 0 &  \cdots  & 0 \\
\theta_{2} & 0 & 1 &   \cdots  & 0 \\
\vdots & \vdots & \ \ddots & \vdots \\
\theta_{d} & 0 & 0&  \cdots  & 1 \\
n_0 & n_1 & n_2 &  \cdots &  n_{d}
\end{array} \right]  \\
&=&  (-1)^{d+2}n_0+\sum_{k=1}^d(-1)^{d+2+k}n_k(-1)^{k+1}\theta_k \det(I_{d-1}) \\
  &=&  (-1)^{d}\left(n_0-\sum_{k=1}^dn_k\theta_k\right) \neq 0
\end{eqnarray*}
for all $(n_0,n_1,\cdots,n_d)\in\mathbb{Z}^{d+1}\setminus \{(0,\cdots,0)\}$, which is equivalent to claim that
\[
n_1\theta_1+\cdots+n_d\theta_d\not\in \mathbb{Z}, \text{ for all }(n_1,\cdots,n_d)\in\mathbb{Z}^{d}\setminus \{(0,\cdots,0)\},
\]
which is what we looked for. To prove the last claim in the theorem, it is enough to observe that, if  $\{1,\theta_1,\cdots,\theta_d\}$ forms a $\mathbb{Q}$-linearly dependent system, then  there are rational numbers   $r_i=n_i/m_i$, $i=0,1,\cdots, d$ (not all of them equal to zero), such that
\[
r_0+r_1\theta_1+\cdots+r_d\theta_d= 0,
\]
so that, multiplying both sides of the equation by  $m=\prod_{k=0}^dm_k$ we get
\[
n_0^*+n_1^*\theta_1+\cdots+n_d^*\theta_d= 0
\]
for certain natural numbers $n_0^*,n_1^*,\cdots,n_d^*$ (not all equal to zero). In particular,
\[
n_1^*\theta_1+\cdots+n_d^*\theta_d\in\mathbb{Z}.
\]
\end{proof}

Montel's theorem was improved by his student T. Popoviciu \cite{popoviciu}  for the case $d=1$ to the following:

\begin{theorem}[Popoviciu, 1935]  \label{Popov} Let $f:\mathbb{R}\to\mathbb{R}$ be such that
\[
\Delta^{m+1}_{h_1}f(x)=\Delta^{m+1}_{h_2}f(x)=0,  \text{  } (x\in\mathbb{R}),
\]
If $h_1/h_2\not\in\mathbb{Q}$ and $f$ is continuous in at least  $m+1$ distinct points, then $f\in \Pi_m$.
\end{theorem}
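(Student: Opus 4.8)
The plan is to reduce to a vanishing statement and then analyze $f$ along the dense two-generator group $G=h_1\mathbb{Z}+h_2\mathbb{Z}$, which is dense in $\mathbb{R}$ precisely because $h_1/h_2\notin\mathbb{Q}$. Let $t_0,\dots,t_m$ be the $m+1$ distinct continuity points and let $p\in\Pi_m$ be the Lagrange interpolant determined by $p(t_j)=f(t_j)$. Since every element of $\Pi_m$ is annihilated by $\Delta_{h_1}^{m+1}$ and $\Delta_{h_2}^{m+1}$, the function $g=f-p$ satisfies the same two difference equations, is continuous at each $t_j$, and vanishes there. Thus it suffices to prove that a function $g$ with these properties is identically $0$; renaming, I assume from now on that $f(t_j)=0$, that $f$ is continuous at each $t_j$, and I aim to show $f\equiv 0$.

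Fix an arbitrary $x\in\mathbb{R}$ and consider the lattice function $P(n_1,n_2)=f(x+n_1h_1+n_2h_2)$ on $\mathbb{Z}^2$. Because $\Delta_{h_1}^{m+1}f\equiv 0$, for each fixed $n_2$ the one-variable sequence $n_1\mapsto P(n_1,n_2)$ has vanishing $(m+1)$-st difference, hence is a polynomial of degree $\le m$ in $n_1$; symmetrically it is a polynomial of degree $\le m$ in $n_2$ for each fixed $n_1$. Interpolating on the grid $\{0,\dots,m\}^2$ by a tensor-product polynomial $\widetilde\Pi$ of bidegree $(m,m)$ and using uniqueness of one-variable interpolation in each slice, one gets $P\equiv\widetilde\Pi$ on all of $\mathbb{Z}^2$; that is, $P$ is the restriction of a bivariate polynomial of bidegree at most $(m,m)$.

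The heart of the argument, and the step I expect to be hardest, is to upgrade this to the statement that $P$ depends only on $u=n_1h_1+n_2h_2$. Continuity of $f$ at $t_0$ makes $f$ bounded on an interval $(t_0-\delta,t_0+\delta)$, so $P$ is bounded on $S=\{(n_1,n_2):\,|n_1h_1+n_2h_2-(t_0-x)|<\delta\}$. Passing to the coordinates $(u,v)=(n_1h_1+n_2h_2,\,n_2)$ turns $P$ into a polynomial $\widetilde P(u,v)=\sum_{j=0}^{J}e_j(u)v^j$. Density of $G$ together with uniqueness of the representation $u=n_1h_1+n_2h_2$ (which holds because $h_1/h_2\notin\mathbb{Q}$) produces infinitely many lattice points in $S$ with $|n_2|\to\infty$ whose $u$-values are dense in $(t_0-x-\delta,\,t_0-x+\delta)$; restricting to a subinterval on which the leading coefficient $e_J$ is bounded away from $0$, boundedness of $\widetilde P$ forces $J=0$. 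Hence $P(n_1,n_2)=Q(n_1h_1+n_2h_2)$ for a single-variable polynomial $Q$, and evaluating on the slice $n_2=0$ shows $\deg Q\le m$.

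It remains to identify $Q$. On the dense set $x+G$ we have $f(x+u)=Q(u)$, so for each $t_j$, letting $x+u\to t_j$ along $x+G$ and using continuity of $f$ at $t_j$ together with $f(t_j)=0$ gives $Q(t_j-x)=0$. Thus the polynomial $Q$ of degree $\le m$ has the $m+1$ distinct roots $t_0-x,\dots,t_m-x$, whence $Q\equiv 0$ and in particular $f(x)=Q(0)=0$. As $x$ was arbitrary, $f\equiv 0$, and undoing the reduction gives $f=p\in\Pi_m$. The only place where the full strength of the hypothesis ``continuity at $m+1$ points'' is used is this final counting of roots: a single continuity point already forces $f$ to be a polynomial of degree $\le m$ on each coset of $G$, and the extra points are exactly what is needed to make all these coset-polynomials coincide.
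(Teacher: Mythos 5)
Your proof is correct, and its skeleton coincides with the paper's: the paper establishes this statement as the $d=1$ case of Theorem \ref{M_T}, via the same three moves you make, namely lattice tensor-product interpolation (your bidegree-$(m,m)$ polynomial $\widetilde{\Pi}$ agreeing with $P$ on all of $\mathbb{Z}^2$ is exactly Lemma \ref{MP_lem1} with $s=2$, and your slice-by-slice uniqueness argument is a tidier derivation of it than the paper's step-by-step grid extension), then the change of variables $(u,v)=(n_1h_1+n_2h_2,\,n_2)$ with boundedness near a single continuity point forcing the $v$-degree to collapse (Theorem \ref{PoVV} together with Corollary \ref{cor_impo}), and finally the remaining continuity points to pin down the resulting polynomial. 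Two local differences are worth recording. First, your collapse step bypasses the paper's Lemma \ref{MP_poli_banda}: rather than controlling perturbed coefficient sequences at one point $\alpha$ with $A_N(\alpha)\neq 0$, you fix a compact subinterval avoiding the zeros of the leading coefficient $e_J$, where $|e_J(u)|\,|v|^J$ dominates $\sum_{j<J}|e_j(u)|\,|v|^j$ as $|v|\to\infty$; to make this airtight, note why points of $G$ in that subinterval must have $|n_2|\to\infty$ --- the values realized with $|n_2|\leq N$ form a finite union of translates of the discrete set $h_1\mathbb{Z}$, so density of $G$ forces points in the subinterval whose every representation has $|n_2|>N$ (your appeal to uniqueness of the representation, valid since $h_1/h_2\notin\mathbb{Q}$, is what underwrites this). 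Second, the endgame differs: the paper keeps $f$ intact, produces a polynomial $A_a$ on each coset $a+H$, and proves the translation identity $A_a(x)=A_{\mathbf{0}}(x-a)$ by comparing both polynomials on the continuity set, using that it is a correct interpolation set (followed by the degree-trimming Part II, which is vacuous for $d=1$); you instead normalize at the outset by subtracting the Lagrange interpolant at the $m+1$ continuity points and then annihilate each coset polynomial $Q_x$ by root counting, since $\deg Q_x\leq m$ (via the slice $n_2=0$) while $Q_x(t_j-x)=0$ for the $m+1$ distinct values $t_0-x,\dots,t_m-x$. Your normalization buys a cleaner one-variable finish; the paper's formulation is the one that generalizes verbatim to $d>1$, where ``continuity at $m+1$ points'' must be replaced by continuity on a correct interpolation set for $\Pi_{dm,\max}^{d}$ and where the degree reduction of Part II genuinely matters --- a trade-off your closing remark about coset polynomials already anticipates.
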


In section 2 we prove, by very elementary means, a theorem which generalizes Montel's Theorem in several variables.  In section 3 we prove a version of Montel-Popoviciu's theorem for functions $f:\mathbb{R}^d\to\mathbb{R}$ for $d>1$. Furthermore, our proof is also valid for the case $d=1$, and, in that case, it differs in several points from Popoviciu's original proof.  In section 4 we prove that Popoviciu's original result is optimal, since, if $h_1,h_2\in\mathbb{R}$ and $h_1/h_2\not\in\mathbb{Q}$, there exists $f:\mathbb{R}\to\mathbb{R}$ continuous in $m$ distinct points which is not an ordinary polynomial and solves the system of equations $\Delta^{m+1}_{h_1}f(x)=\Delta^{m+1}_{h_2}f(x)=0$. Finally, in that section we also consider the optimality of Montel-Popoviciu's theorem in the several variables setting.




In this paper we use the following standard notation: $\Pi_m$ denotes the space of complex polynomials of a real variable, with degree $\leq m$. $\Pi_{m,\max}^d$ denotes the space of complex polynomials of $d$ real variables, with degree $\leq m$ with respect to each one of these variables. More precisely,
\[
P(x_1,\cdots,x_d)\in \Pi_{m,\max}^d \text{ if and only if } P(x_1,\cdots,x_d)=\sum_{i_1=0}^m\sum_{i_2=0}^m\cdots\sum_{i_d=0}^ma_{i_1,\cdots,i_d}x_1^{i_1}\cdots x_d^{i_d},
\]
with $a_{i_1,\cdots,i_d}$ being complex numbers. Finally, $\Pi_{m,\text{tot}}^d$ denotes the space of complex polynomials of $d$ real variables, with total degree $\leq m$; that is,
\[
P(x_1,\cdots,x_d)\in \Pi_{m,\text{tot}}^d \text{ if and only if } P(x_1,\cdots,x_d)=\sum_{i_1,\cdots,i_d\in\mathbb{N} \text{ and } i_1+\cdots+i_d\leq m} a_{i_1,\cdots,i_d}x_1^{i_1}\cdots x_d^{i_d},
\]
with $a_{i_1,\cdots,i_d}$ being complex numbers. Of course, $\Pi_m=\Pi_{m,\max}^1=\Pi_{m,\text{tot}}^1$.

\section{A generalization of Montel theorem}

For the statement of the results in this section, we need to recall the following concept from interpolation theory: Given $\Lambda\subseteq \Pi^d$ a subspace of the space of polynomials in $d$ real variables, and given $W\subseteq \mathbb{R}^d$, we say that $W$ is a correct interpolation set  (sometimes  also referred as insolvent ) for $\Lambda$ if and only if  for any function $f:W\to\mathbb{C}$ there exists a unique polynomial $P\in \Lambda$ such that $P(x)=f(x)$ for all $x\in W$. In particular, if $W$ is a correct interpolation set for $\Lambda$ and $P\in\Lambda$ satisfies $P_{|W}=0$, then $P=0$. These sets have been characterized for several spaces of polynomials $\Lambda$ \cite{tesis_correct_interp}. In particular, the technique of tensor product interpolation guarantees that the sets $W$ of the form
$$W=\{x_0^1,x_1^1,\cdots,x_m^1\}\times \{x_0^2,x_1^2,\cdots,x_m^2\}\times \cdots \times \{x_0^d,x_1^d,\cdots,x_m^d\},$$
which form a rectangular grid of $(m+1)^d$ points in $\mathbb{R}^d$, are correct interpolation sets for $\Lambda=\Pi_{m,\max}^d$ (see, e.g., \cite[page 295]{I_K}).

Let $G$ be an Abelian group, let  $f:G\to\mathbb{C}$ be an arbitrary function. The  functions $\Delta_h^nf:G\to\mathbb{C}$ are well defined and $f$ is named a complex polynomial function of degree $\leq m$  on $G$ if  $\Delta_h^{m+1}f(x)=0$ for all $x,h\in G$. If there exist additive functions $a_k:G\to \mathbb{C}$, $k=1,\cdots,t$ and an ordinary complex polynomial $P\in\mathbb{C}[x_1,\cdots,x_t] $ of total degree $\leq m$ such that $f(x)=P(a_1(x),\cdots,a_t(x))$ for all $x\in G$, we just say that $f$ is a polynomial on $G$. It is known that, for finitely generated Abelian groups, every polynomial function is a polynomial \cite{laszlo1}.

Given $\gamma=\{h_1,\cdots,h_s\}\subseteq G$, $a\in G$, and given $f:G\to \mathbb{C}$ a polynomial function of degree $\leq m$,  there exists a unique polynomial $P=P_{a,\gamma}\in\Pi_{m,\max}^s$ such that  
\[
P(i_1,i_2,\cdots,i_{s})=f_{i_1,\cdots,i_{s}}:=f(a+\sum_{k=1}^{s}i_kh_k),
\]
for all  $0\leq i_k\leq m$, $1\leq k\leq s$, since $W=\{(i_1,\cdots,i_s): 0\leq i_k\leq m,\ k=1,\cdots,s\}$ is a correct interpolation set for $\Pi_{m,\max}^s$. In all what follows, we denote this polynomial  by  $P_{a,\gamma}$.


\begin{lemma} \label{MP_lem1} Let $G$ be a commutative group and $f:G\to\mathbb{C}$ be a function. If 
$\Delta_{h_k}^{m+1}f(x)=0 $  for all $x \in G $ and $k=1,\cdots, s$,  then
\[
P_{a,\gamma}(i_1,i_2,\cdots,i_{s})=f(a+\sum_{k=1}^{s}i_kh_k), \text{ for all  } (i_1,\cdots,i_{s})\in\mathbb{Z}^{d+1}.
\]
\end{lemma}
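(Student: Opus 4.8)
The plan is to reduce the whole statement to a one-variable fact about sequences whose $(m+1)$-th finite difference vanishes, and then to extend the interpolation identity from the grid $\{0,1,\dots,m\}^s$ to all integer tuples one coordinate at a time.

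First I would isolate the key one-variable lemma: if two functions $g,p:\mathbb{Z}\to\mathbb{C}$ both satisfy $\Delta^{m+1}g=\Delta^{m+1}p=0$ (ordinary forward differences) and agree on $\{0,1,\dots,m\}$, then $g=p$ on all of $\mathbb{Z}$. Indeed, $d:=g-p$ satisfies $\Delta^{m+1}d=0$ and $d(0)=\cdots=d(m)=0$. Writing out $\Delta^{m+1}d(j)=\sum_{k=0}^{m+1}\binom{m+1}{k}(-1)^{m+1-k}d(j+k)=0$, the coefficients of the two extreme terms $d(j)$ and $d(j+m+1)$ are $\pm1$, hence nonzero; this lets me solve the recurrence forward (getting $d(m+1)=d(m+2)=\cdots=0$) and, just as importantly, backward (getting $d(-1)=d(-2)=\cdots=0$). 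Thus $d\equiv 0$.

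Next I would translate the hypothesis into this language. Fixing an index $r$ and integers $i_l$ for $l\neq r$, set $g(j):=f\big(a+\sum_{l\neq r}i_lh_l+jh_r\big)$; then $\Delta^{m+1}g(j)=\Delta_{h_r}^{m+1}f\big(a+\sum_{l\neq r}i_lh_l+jh_r\big)=0$ for every $j$, by hypothesis. On the other hand, $p(j):=P_{a,\gamma}(i_1,\dots,i_{r-1},j,i_{r+1},\dots,i_s)$ is, because $P_{a,\gamma}\in\Pi_{m,\max}^s$, a polynomial of degree $\le m$ in $j$, so $\Delta^{m+1}p=0$ as well.

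Finally I would run the coordinatewise extension. For $0\le r\le s$ let $C_r$ be the assertion that $P_{a,\gamma}(i_1,\dots,i_s)=f(a+\sum_k i_kh_k)$ whenever $i_1,\dots,i_r\in\mathbb{Z}$ are arbitrary and $i_{r+1},\dots,i_s\in\{0,\dots,m\}$. The case $C_0$ is exactly the defining interpolation property of $P_{a,\gamma}$ on the grid. Assuming $C_{r-1}$, I fix arbitrary integers $i_1,\dots,i_{r-1}$ and grid values $i_{r+1},\dots,i_s\in\{0,\dots,m\}$ and apply the one-variable lemma to the functions $g,p$ above: both satisfy $\Delta^{m+1}(\cdot)=0$, and by $C_{r-1}$ they agree for $j\in\{0,\dots,m\}$, hence for all $j\in\mathbb{Z}$; this is precisely $C_r$. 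After $s$ steps $C_s$ gives the identity for all $(i_1,\dots,i_s)\in\mathbb{Z}^s$, which is the claim. The only genuinely delicate point is the backward half of the one-variable lemma, and that is settled by the observation that the leading coefficient of $\Delta^{m+1}$ is nonzero, so the recurrence can be inverted.
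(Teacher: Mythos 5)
Your proof is correct and takes essentially the same route as the paper's: both extend the interpolation identity one coordinate at a time by exploiting that $\Delta^{m+1}$ annihilates both the degree-$\le m$ one-variable sections of $P_{a,\gamma}$ and $f$ along each $h_k$, and then solve the resulting recurrence forward and backward (the extreme coefficients being $\pm 1$). Your explicit one-variable uniqueness lemma and the induction statements $C_r$ simply formalize, more cleanly, what the paper does when it extracts the last (resp.\ first) term of the difference and ``repeats these arguments forward and backward infinitely many times, for each $k$.''
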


\noindent \textbf{Proof. } Let us fix the values of  $k\in\{1,\cdots,s\}$ and $i_1,\cdots,i_{k-1},i_{k+1},\cdots,i_{s}\in\{0,1,\cdots,m\}$, and let us consider the polynomial of one variable
$$q_k(x)=P_{a,\gamma}(i_1,\cdots, i_{k-1},x,i_{k+1}, \cdots,i_{d+1}) .$$
Obviously $q_k\in\Pi_m^1$, so that
\begin{eqnarray*}
&\ & 0 = \Delta_{1}^{m+1}q_k(0)=\sum_{r=0}^{m+1}\binom{m+1}{r}(-1)^{m+1-r}q_k(r) \\
&\ & = \sum_{r=0}^{m}\binom{m+1}{r}(-1)^{m+1-r}P_{a,\gamma}(i_1,\cdots, i_{k-1},r,i_{k+1}, \cdots,i_{d+1}) +  q_k(m+1)\\
&\ & = \sum_{r=0}^{m}\binom{m+1}{r}(-1)^{m+1-r}f(a+\sum_{(0\leq j\leq s;\ j\neq k)} i_jh_j+ rh_k) + q_k(m+1)\\
&\ & = \Delta_{h_k}^{m+1}f(a+\sum_{(0\leq j\leq s;\ j\neq k)} i_jh_j) -f(a+\sum_{(0\leq j\leq s;\ j\neq k)} i_jh_j+ (m+1)h_k)\\
&\ & \ \ \ + q_k(m+1)\\
&\ & =  q_k(m+1)-f(a+\sum_{(0\leq j\leq s;\ j\neq k)} i_jh_j+ (m+1)h_k).
\end{eqnarray*}

It follows that
\begin{equation} \label{nuevodato}
 \begin{array}{cccccccc}
q_k(m+1) & = & \  &   P_{a,\gamma}(i_1,\cdots, i_{k-1},(m+1),i_{k+1}, \cdots,i_{s}) \\
\  & = & \  &  f(a+\sum_{(0\leq j\leq s;\ j\neq k)} i_jh_j+ (m+1)h_k).
\end{array}
\end{equation}
Let us now consider the unique  polynomial $P\in\Pi_{m,max}^{s}$ which satisfies the Lagrange interpolation conditions
\[
P(i_1,i_2,\cdots,i_{s}h_{s}) = f(a+\sum_{k=1}^{s}i_kh_k)
\]
for all $0\leq i_j\leq m$,  $1\leq j\leq s, j\neq k$, and all $1\leq i_k\leq m+1$.  We have already demonstrated, with formula \eqref{nuevodato}, that this polynomial  coincides with $P_{a,\gamma}$. Furthermore, the very same arguments used to prove  \eqref{nuevodato}, applied to the polynomial $P=P_{a,\gamma}$, lead us to the conclusion that
\begin{eqnarray*}
&\ &  P_{a,\gamma}(i_1,\cdots, i_{k-1},(m+2),i_{k+1}, \cdots,i_{s}h_{s}) \\
 &\ & \ \ \ =  f(a+\sum_{(0\leq j\leq s;\ j\neq k)} i_jh_j+ (m+2)h_k)
\end{eqnarray*}
In an analogous way, extracting this time the first term of the sum, and taking as starting point the equality
\[
 \Delta_{h_k}^{m+1}f(a+\sum_{(0\leq j\leq s;\ j\neq k)} i_jh_j-h_k)=0,
\]
we conclude that
\begin{eqnarray*}
 &\ & P_{a,\gamma}(i_1,\cdots, i_{k-1},-1,i_{k+1}, \cdots,i_{s}) \\
 &\ & \ \ \ = f(a+\sum_{(0\leq j\leq s;\ j\neq k)} i_jh_j- h_k).
\end{eqnarray*}
Repeating these arguments forward and backward infinitely many times, and for each  $k\in\{1,\cdots,s\}$, we get
\begin{eqnarray*}
&\ & P_{a,\gamma}(i_1,i_2,\cdots,i_{s})\\
&\ & \ \ \  = f(a+\sum_{k=1}^{s}i_kh_k), \text{ for all } (i_1,\cdots,i_{s})\in\mathbb{Z}^{d+1},
\end{eqnarray*}
which is what we wanted to prove. {\hfill $\Box$}

\begin{corollary} \label{nuevo_montel_0}  Let $G$ be  a topological Abelian group. If $f:G\to\mathbb{C}$ satisfies
$\Delta_{h_k}^{m+1}f(x)=0 $  for all $x \in G $ and $k=1,\cdots, s$,  then
\[
\Delta_h^{sm+1}f(x)=0
\]
for all $h\in H=h_1\mathbb{Z}+\cdots+h_s\mathbb{Z}$. 
\end{corollary}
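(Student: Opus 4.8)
The plan is to reduce the multivariate difference equation to a single-variable finite-difference computation, using Lemma \ref{MP_lem1} to replace $f$ by an honest polynomial along every lattice generated by $\gamma=\{h_1,\dots,h_s\}$. Fix an arbitrary $x\in G$ and an arbitrary $h\in H$, say $h=\sum_{k=1}^s n_k h_k$ with $n_k\in\mathbb{Z}$. Applying Lemma \ref{MP_lem1} with $a=x$ produces a polynomial $P_{x,\gamma}\in\Pi_{m,\max}^s$ with
\[
f\Big(x+\sum_{k=1}^s i_k h_k\Big)=P_{x,\gamma}(i_1,\dots,i_s)\qquad\text{for all }(i_1,\dots,i_s)\in\mathbb{Z}^s .
\]
The only quantitative input we shall need is this: since $P_{x,\gamma}$ has degree $\leq m$ in each of its $s$ variables, its total degree is at most $sm$.

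Next I would restrict $P_{x,\gamma}$ to the line determined by $h$ by introducing the one-variable polynomial $q(t)=P_{x,\gamma}(n_1 t,\dots,n_s t)$. Substituting $t\,n_k$ for the $k$-th variable sends a monomial $x_1^{a_1}\cdots x_s^{a_s}$ with $a_1+\cdots+a_s\leq sm$ to a constant multiple of $t^{a_1+\cdots+a_s}$, so $\deg q\leq sm$. Moreover, for every integer $j$ we have $x+jh=x+\sum_{k=1}^s (jn_k)h_k$, so the displayed identity gives $f(x+jh)=P_{x,\gamma}(jn_1,\dots,jn_s)=q(j)$.

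It then remains to compute the difference directly. Expanding $\Delta_h^{sm+1}f(x)$ by \eqref{frepasofijo} and substituting $f(x+jh)=q(j)$ yields
\[
\Delta_h^{sm+1}f(x)=\sum_{j=0}^{sm+1}\binom{sm+1}{j}(-1)^{sm+1-j}q(j)=\Delta_1^{sm+1}q(0)=0,
\]
the last equality because the $(sm+1)$-th finite difference annihilates any polynomial of degree $\leq sm$. Since $x\in G$ and $h\in H$ were arbitrary, this is exactly the claim. There is no deep obstacle here once Lemma \ref{MP_lem1} is available; the one point to get right is the degree bookkeeping, for each individual coordinate direction forces only degree $m$, whereas a general $h\in H$ mixes all $s$ directions, so the governing quantity is the total degree $sm$, which is precisely what dictates the exponent $sm+1$. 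It is worth noting that the topological structure of $G$ plays no role in this argument: the conclusion is purely algebraic and the hypothesis that $G$ be a topological group is presumably recorded only for later use.
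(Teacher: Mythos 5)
Your proof is correct and follows essentially the same route as the paper: both invoke Lemma \ref{MP_lem1} to replace $f$ along the lattice $x+\mathbb{Z}h_1+\cdots+\mathbb{Z}h_s$ by the interpolating polynomial $P_{x,\gamma}\in\Pi_{m,\max}^s$, observe that its total degree is at most $sm$, and conclude that the $(sm+1)$-th difference in the direction $(n_1,\dots,n_s)$ vanishes. Your restriction to the one-variable polynomial $q(t)=P_{x,\gamma}(n_1t,\dots,n_st)$ merely makes explicit the degree bookkeeping behind the paper's step $\Delta_{(i_1,\dots,i_s)}^{sm+1}P_{x,\gamma}(0)=0$, and your closing observation that the topology of $G$ is irrelevant here is accurate as well.
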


\begin{proof}
Let $P_{a,\gamma}$ be the polynomial constructed in Lemma \ref{MP_lem1}, and let $x\in\mathbb{R}^d$. Then
\[
P_{x,\gamma}(i_1,i_2,\cdots,i_{s})=f(x+\sum_{k=1}^{s}i_kh_k), \text{ for all  } (i_1,\cdots,i_{s})\in\mathbb{Z}^{d+1}.
\]
Hence, if $h=\sum_{k=1}^{s}i_kh_k\in G$,
\begin{eqnarray*}
\Delta_h^{sm+1}f(x) &=& \sum_{r=0}^{sm+1}\binom{sm+1}{r}(-1)^{sm+1-r}f(x+r\sum_{k=1}^{s}i_kh_k)\\
&=& \sum_{r=0}^{sm+1}\binom{sm+1}{r}(-1)^{sm+1-r}P_{x,\gamma}(r(i_1,\cdots,i_s))\\
&=& \Delta_{(i_1,\cdots,i_s)}^{sm+1}P_{x,\gamma}(0) =0,
\end{eqnarray*}
since $P_{x,\gamma}$ is a polynomial in $s$ variables of total degree $\leq sm$. This proves the first part of the corollary.
\end{proof}

\begin{corollary} \label{nuevo_montel}  If $G$ is a topological Abelian group, $f:G \to\mathbb{R}$ is continuous and satisfies
$\Delta_{h_k}^{m+1}f(x)=0 $  for all $x \in G $ and $k=1,\cdots, s$ and $ H=h_1\mathbb{Z}+\cdots+h_s\mathbb{Z}$ is a dense subgroup of $G$, then
\[
\Delta_h^{sm+1}f(x)=0
\]
for all $x,h\in G$. In other words, $f$ is a continuous polynomial on $G$ of degree $\leq sm$. 

In particular, if $G=\mathbb{R}^d$, then $f\in \Pi_{sm,\text{tot}}^d$.
\end{corollary}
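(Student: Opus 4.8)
The plan is to deduce the corollary from Corollary \ref{nuevo_montel_0} by a density-plus-continuity argument, and then to upgrade the conclusion in the Euclidean case. First I would recall that Corollary \ref{nuevo_montel_0} already gives $\Delta_h^{sm+1}f(x)=0$ for every $x\in G$ and every $h\in H=h_1\mathbb{Z}+\cdots+h_s\mathbb{Z}$; the only remaining task for the first assertion is to replace $h\in H$ by an arbitrary $h\in G$. To this end, fix $x\in G$ and consider the map
\[
\Phi_x(h)=\Delta_h^{sm+1}f(x)=\sum_{r=0}^{sm+1}\binom{sm+1}{r}(-1)^{sm+1-r}f(x+rh).
\]
Since $G$ is a topological group, each map $h\mapsto rh$ is continuous (it is an $r$-fold iteration of the group operation), and hence so is $h\mapsto x+rh$; composing with the continuous function $f$ and taking a finite linear combination shows that $\Phi_x$ is continuous on $G$. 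By Corollary \ref{nuevo_montel_0} we have $\Phi_x\equiv 0$ on $H$, and $H$ is dense in $G$, so $\Phi_x$ vanishes on $\overline{H}=G$. As $x$ was arbitrary, $\Delta_h^{sm+1}f(x)=0$ for all $x,h\in G$; that is, $f$ is a continuous polynomial function on $G$ of degree $\leq sm$.

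For the final claim I would specialize to $G=\mathbb{R}^d$ and promote ``polynomial function'' to ``ordinary polynomial''. By the structure theorem for solutions of \eqref{fregeneral} recalled in the introduction, $f$ admits a representation $f(x)=A_0+A_1(x)+\cdots+A_{sm}(x)$, where $A_0$ is constant and each $A_k$ is the diagonalization of a symmetric $k$-additive form $A^k:(\mathbb{R}^d)^k\to\mathbb{R}$, with $A_k(rx)=r^kA_k(x)$ for every $r\in\mathbb{Q}$. Thus for rational $r$ one has $f(rx)=\sum_{k=0}^{sm}r^kA_k(x)$, and, since both sides are continuous in $r$ and agree on $\mathbb{Q}$, this identity persists for all $r\in\mathbb{R}$. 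Choosing $sm+1$ distinct values of $r$ and inverting the resulting Vandermonde system expresses each $A_k(x)$ as a fixed finite linear combination of the values $f(r_jx)$; hence every $A_k$ is continuous.

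It then remains to observe that a continuous diagonalization of a symmetric $k$-additive form is a genuine homogeneous polynomial of degree $k$. Indeed, polarization recovers $A^k$ as a finite linear combination of values of $A_k$, so $A^k$ is continuous; being additive in each argument and continuous, it is $\mathbb{R}$-multilinear (a continuous solution of Cauchy's equation on $\mathbb{R}^d$ is linear), and therefore $A_k(x)=A^k(x,\dots,x)$ is a homogeneous polynomial of degree $k$ in the coordinates of $x$. Summing over $k$ yields $f\in\Pi_{sm,\text{tot}}^d$.

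I expect the density-continuity step to be entirely routine; the only real work lies in the regularity upgrade over $\mathbb{R}^d$, where one must carefully separate the homogeneous components $A_k$ (the Vandermonde inversion) and then invoke the fact that continuity turns the merely $\mathbb{Q}$-additive forms $A^k$ into honest $\mathbb{R}$-multilinear maps. This last point is the standard Cauchy-equation regularity phenomenon, and it is precisely where the hypothesis that $f$ is continuous, rather than the weaker boundedness hypotheses available in one variable, is used most directly.
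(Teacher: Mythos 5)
Your proposal is correct, and its first half is exactly the paper's argument: the paper disposes of the passage from $h\in H$ to $h\in G$ with the single phrase ``it is clear that $\Delta_h^{sm+1}f(x)=0$ for all $x,h$,'' and your map $\Phi_x(h)=\Delta_h^{sm+1}f(x)$, continuous in $h$ and vanishing on the dense subgroup $H$ by Corollary \ref{nuevo_montel_0}, is precisely that ``clear'' step written out. Where you genuinely diverge is the Euclidean upgrade: the paper treats it as a citation, invoking Fr\'{e}chet's theorem for ordinary polynomiality and referring to \cite[Theorem 3.1]{AK_TP} for the total-degree bound $\leq sm$, whereas you reprove both facts from the structure theorem recalled in the introduction (solutions of \eqref{frepasofijo} are $f=A_0+A_1+\cdots+A_{sm}$ with $A_k$ the diagonalization of a symmetric $k$-additive form, via Djokovi\'{c}'s theorem \cite{Dj}): Vandermonde inversion of $f(r_jx)=\sum_k r_j^kA_k(x)$ isolates each $A_k$ as a fixed linear combination of continuous functions — note that choosing the nodes $r_j$ rational would even let you skip the continuity-in-$r$ extension, since the identity holds on $\mathbb{Q}$ outright — and then polarization plus the Cauchy-equation regularity argument turns each $A^k$ into a genuine $\mathbb{R}$-multilinear form. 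Each approach buys something: the paper's proof is three lines but rests on two external results, while yours is self-contained modulo the algebraic structure theorem, makes explicit exactly where continuity (as opposed to a weaker one-variable boundedness hypothesis) enters, and delivers the sharp bound $f\in\Pi_{sm,\text{tot}}^d$ automatically, since each $A_k$ is homogeneous of degree $k\leq sm$. One terminological point, not a gap: for general topological Abelian $G$, what both you and the paper actually establish is that $f$ is a polynomial \emph{function} of degree $\leq sm$; the paper's rephrasing of this as ``continuous polynomial on $G$'' tacitly uses the identification of polynomial functions with polynomials, which the paper only records for finitely generated groups.
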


\begin{proof}
Let us now assume also that $f$ is continuous and $G$ is dense in $\mathbb{R}^d$. Then it is clear that $\Delta_h^{sm+1}f(x) =0$ for all $x,h$ in $\mathbb{R}^d$, and, if $G=\mathbb{R}^d$, Fr\'{e}chet's Theorem  implies that $f$ is an ordinary polynomial. In fact, it is not difficult to prove that, in this case, $f$ must have total degree $\leq sm$ (see, for example, \cite[Theorem 3.1]{AK_TP}).
\end{proof}

\begin{remark} Corollary \ref{nuevo_montel} is optimal for $G=\mathbb{R}^d$ since, for each $s\geq d+1$ there exists $f:\mathbb{R}^d\to\mathbb{R}$ and 
$\{h_1,\cdots,h_s\}\subset \mathbb{R}^d$ such that $H=h_1\mathbb{Z}+\cdots+h_s\mathbb{Z}$ is dense in $\mathbb{R}^d$, $\Delta_{h_k}^{m+1}f(x)=0 $  for all $x \in\mathbb{R}^d $ and $k=1,\cdots, s$, and $\Delta_{h}^{sm}f\neq 0$ for some $h\in H$.  To prove this, let $\gamma=\{h_1,\cdots,h_s\}\subseteq \mathbb{R}^d$ be such that $G=h_1\mathbb{Z}+\cdots+h_s\mathbb{Z}$ is dense in $\mathbb{R}^d$, and let us assume that
\[
A_{\gamma}=\textbf{col}[h_1,\cdots,h_s] =\left [\begin{array}{ccccccccc}
a_{11} & a_{12} & \cdots & a_{1s}\\
a_{21} & a_{22} & \cdots & a_{2s}\\
\vdots & \ddots & \cdots & \vdots\\
a_{d1} & a_{d2} & \cdots & a_{ds}\\
\end{array}
\right]
\]
contains a set $\{a_{k1} , a_{k2},  \cdots,  a_{ks}\}$ which forms a $\mathbb{Q}$-linearly independent set of real numbers. If we consider two vectors $(i_1,\cdots,i_s),(j_1,\cdots,j_s)\in\mathbb{Z}^s$, then
\[
i_1h_1+\cdots+i_sh_s= j_1h_1+\cdots+j_sh_s \text{ if and only if } i_k=j_k \text{ for all } k=1,\cdots,s.
\]
Hence the function $f:\mathbb{R}^d\to\mathbb{R}$ given by
\[
f(x)=\left\{\begin{array}{cccc}
P(i_1,\cdots,i_s) & & \text{ if } x = i_1h_1+\cdots+i_sh_s \text{ and } (i_1,\cdots,i_s)\in\mathbb{Z}^s\\
0 & & \text{otherwise}
\end{array}\right.,
\]
is well defined for any map $P:\mathbb{Z}^s\to\mathbb{R}$.  Let  $P\in \Pi_{m,\max}^s\subseteq \Pi_{sm,\text{tot}}^s$  be the polynomial $P(x_1,\cdots,x_s)=x_1^mx_2^m\cdots x_s^m$. 
Then 
\[
\Delta_{h_k}^{m+1}f(x)=0,\ \text{ for all } x\in \mathbb{R} \text{ and } k=1,\cdots,s,
\]
and, on the other hand, if we consider the monomial $\varphi_N:\mathbb{R}\to\mathbb{R}$ defined by $\varphi_N(t)=t^N$, then 
\begin{eqnarray*}
\Delta_{h_1+\cdots+h_s}^{sm}f(0) &=& \Delta_{(1,1,1,\cdots,1)}^{sm}P(0,0,\cdots,0)= \sum_{k=0}^{sm} \binom{sm}{k}(-1)^{sm-k} P(k,k,\cdots,k) \\
&=&  \sum_{k=0}^{sm} \binom{sm}{k}(-1)^{sm-k} k^{sm}\\
&=& \Delta_{1}^{sm}\varphi_{sm}(0)= (sm)!\varphi_{sm}(1)=(sm)!\neq 0,
\end{eqnarray*}
since the monomials $\varphi_N$ satisfy the functional equation $\frac{1}{N!}\Delta_h^N\varphi_N(x)=\varphi_N(h)$. Hence $f$ satisfies our requirements.

To support our argument let us show an example of  matrix $A_{\gamma}$ satisfying our hypotheses. Let $\{1,\theta_1,\cdots,\theta_d\}$ be a $\mathbb{Q}$-linearly independent set of real numbers of size $d+1$ with $s=d+1$. Indeed, we impose  $\theta_k=\pi^k$ for all $k$.  Let us consider  the matrix
\[
A_{\gamma}=\textbf{col}[h_1,\cdots,h_s] :=\left [
\begin{array}{cccccc}
\pi & 1 & \pi^{2} &  \cdots  & \pi^{d} \\
\pi^{2} & 0 & 1 &   \cdots  & 0 \\
\vdots & \vdots & \ \ddots & \vdots \\
\pi^{d} & 0 & 0&  \cdots  & 1
\end{array} \right]
\]
Then part $(ii)$ of Theorem \ref{subgruposdensos} claims that $G=h_1\mathbb{Z}+\cdots+h_s\mathbb{Z}$ is dense in $\mathbb{R}^d$ if and only if, for each $(n_0,\cdots,n_d)\in\mathbb{Z}^{d+1}\setminus\{(0,\cdots,0)\}$,
\[
0\neq \det(B_{\gamma}(n_0,\cdots,n_d)):=\det \left [
\begin{array}{cccccc}
\pi & 1 & \pi^{2} &  \cdots  & \pi^{d} \\
\pi^{2} & 0 & 1 &   \cdots  & 0 \\
\vdots & \vdots & \ \ddots & \vdots \\
\pi^{d} & 0 & 0&  \cdots  & 1 \\
n_0 & n_1 & n_2 &  \cdots &  n_{d}
\end{array} \right]
\]
Now, a simple computation (expanding the determinant by the first row) that
\begin{eqnarray*}
\det(B_{\gamma}(n_0,\cdots,n_d)) &=& (-1)^dn_0+(-1)^{d+1}[\pi-\sum_{k=2}^d\pi^{2k}]n_1+(-1)^{d+1}\sum_{k=2}^d\pi^kn_k\\
&=& (-1)^dn_0+(-1)^{d+1}\sum_{k=1}^d\pi^kn_k +(-1)^d\sum_{k=2}^d\pi^{2k}n_1,
\end{eqnarray*}
which does not vanish if  $(n_0,\cdots,n_d)\in\mathbb{Z}^{d+1}\setminus\{(0,\cdots,0)\}$, since $\pi$ is a transcendental number. This solves the case $s=d+1$. The general case follows as a direct consequence of this one, since the matrices
\[
\left[\begin{array}{cccccccccc}
\pi & 1 & \pi^{2} &  \cdots  & \pi^{d} &\pi^{d+1} & \cdots &\pi^s\\
\pi^{2} & 0 & 1 &   \cdots  & 0 & 0 & \cdots & 0\\
\vdots & \vdots & \ \ddots & \vdots & \vdots &\vdots &\ddots & \vdots \\
\pi^{d} & 0 & 0&  \cdots  & 1& 0 & \cdots & 0 \\
n_0 & n_1 & n_2 &  \cdots &  n_{d}& n_{d+1} &\cdots & n_s
\end{array} \right]
\]
contain $B_{\gamma}(n_0,\cdots,n_d)$ as a submatrix and, hence, have maximal rank for all  $s>d$ and all  $(n_0,\cdots,n_s)\in\mathbb{Z}^{s+1}\setminus\{(0,\cdots,0)\}$.


 \end{remark}

Several distributional techniques have been used in the study of functional equations. In particular, Fourier transform of tempered distributions is used in \cite{baker} to introduce a method of solving some special functional equations. This motivates  study  Montel's type theorems in the distributional setting. 
Let $\mathcal{S}'(\mathbb{R}^d)$ denote the space of complex tempered distributions defined on $\mathbb{R}^d$. Obviously, $\mathcal{S}'(\mathbb{R}^d)$ is a vector space and  the operators $\Delta_h^s$ can be defined as endomorphisms of $\mathcal{S}'(\mathbb{R}^d)$ by the formula, $$\Delta_h^sf\{\varphi\}=(-1)^sf\{\Delta_{-h}^s\varphi\}$$ for $s=1,2,\cdots$. Furthermore, if $\alpha=(\alpha_1,\cdots,\alpha_d)\in\mathbb{N}^d$ is any multi-index and we denote by \\
$D^\alpha f=\displaystyle \frac{\partial ^{\alpha_1}}{\partial x_1^{\alpha_1}} \frac{\partial ^{\alpha_2}}{\partial x_2^{\alpha_2}}\cdots \frac{\partial ^{\alpha_d}}{\partial x_d^{\alpha_d}}f$ the $\alpha$-th generalized derivative of $f$, then $\Delta_h^mD^\alpha=D^\alpha\Delta_{h}^m$ for all $m\in\mathbb{N}$, and if $D^\alpha f=0$ for all multi indices $\alpha$ with $|\alpha|=n$, then there exists  $P\in\Pi_{n-1,\text{tot}}^d$ such that $P=f$ almost everywhere. Of course, if $f$  is taken to be continuous, then $f=P$ everywhere. Finally, the structure theorem for tempered distributions guarantees that if $f\in \mathcal{S}'(\mathbb{R}^d)$ then there exist a  slowly growing continuous function $F$  and  a natural number $n\in\mathbb{N}$ such that $f= D^{(n,n,\cdots,n)}F$ \cite[page 98]{vla}. Tempered distributions are also interesting because the Fourier transform $\mathcal{F}$ can be defined as an automorphism $\mathcal{F}:\mathcal{S}'(\mathbb{R}^d)\to\mathcal{S}'(\mathbb{R}^d)$ just imposing $\mathcal{F}(f)\{\varphi\}=f\{\mathcal{F}(\varphi)\}$, and the new operator preserves the main properties of the classical Fourier transform (see, for example, \cite[page 144]{donoghue}, \cite[page 192, Theorem 7.15]{rudin}).

\begin{lemma} \label{operator}
Assume that $f\in \mathcal{S}'(\mathbb{R}^d)$ and let $\Gamma=\Delta_{e_1}^1\Delta_{e_2}^1\cdots\Delta_{e_d}^1$ and $n\in\mathbb{N}\setminus\{0\}$.  Then  $D^{(n,n,\cdots,n)}f=0$ implies $\Gamma^nf=0$.
\end{lemma}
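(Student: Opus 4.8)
The plan is to factor the operator $\Gamma^n$ through $D^{(n,n,\dots,n)}$, so that the vanishing of the latter forces the vanishing of the former. First I would observe that, since the one-dimensional difference operators $\Delta_{e_j}^1$ commute with one another (and with themselves), we may rearrange
\[
\Gamma^n=\left(\Delta_{e_1}^1\Delta_{e_2}^1\cdots\Delta_{e_d}^1\right)^n=\Delta_{e_1}^n\Delta_{e_2}^n\cdots\Delta_{e_d}^n .
\]
Thus it suffices to relate each one-variable operator $\Delta_{e_j}^n$ to the pure derivative $\partial_{x_j}^n$ and then to multiply the resulting factorizations together.

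The cleanest way to do this is via the Fourier transform, which (as recalled just above) is an automorphism of $\mathcal{S}'(\mathbb{R}^d)$. Writing $\widehat{g}=\mathcal{F}(g)$, the operator $\Delta_{e_j}^1$ has Fourier symbol $e^{i\xi_j}-1$ and $\partial_{x_j}$ has symbol $i\xi_j$, so that
\[
\mathcal{F}\big(\Gamma^n f\big)=\prod_{j=1}^d\left(e^{i\xi_j}-1\right)^n\,\widehat{f},\qquad \mathcal{F}\big(D^{(n,\dots,n)}f\big)=\prod_{j=1}^d\left(i\xi_j\right)^n\,\widehat{f}.
\]
The key point is the elementary factorization
\[
e^{i\xi_j}-1=(i\xi_j)\,\phi(\xi_j),\qquad \phi(t):=\frac{e^{it}-1}{it}=\int_0^1 e^{ist}\,ds,
\]
where $\phi$ extends to an entire function with $\phi(0)=1$. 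From the integral representation one sees that $\phi$ and all of its derivatives are bounded on $\mathbb{R}$ (indeed $|\phi^{(k)}(t)|\le 1/(k+1)$), so the $d$-dimensional function $M(\xi)=\prod_{j=1}^d\phi(\xi_j)^n$ is smooth and slowly increasing, i.e. it is a multiplier of the Schwartz space; multiplication by $M$ is therefore a well-defined endomorphism of $\mathcal{S}'(\mathbb{R}^d)$. Consequently
\[
\mathcal{F}\big(\Gamma^n f\big)=M(\xi)\,\mathcal{F}\big(D^{(n,\dots,n)}f\big),
\]
and if $D^{(n,\dots,n)}f=0$ the right-hand side vanishes; since $\mathcal{F}$ is injective we conclude $\Gamma^n f=0$.

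Equivalently, and without leaving the space-side, the same factorization reads $\Delta_{e_j}^1=\partial_{x_j}\circ C_j$, where $C_j$ denotes convolution in the $j$-th variable with the indicator function of $[-1,0]$ (tensored with Dirac masses in the remaining variables), a compactly supported distribution; raising to the $n$-th power and collecting terms gives $\Gamma^n=\big(\prod_j C_j^{\,n}\big)\,D^{(n,\dots,n)}$, whence the conclusion. I expect the only genuine technical point to be the verification that the symbol $M$ lies in the multiplier class $\mathcal{O}_M(\mathbb{R}^d)$ (equivalently, that $C_j$ is a legitimate endomorphism of $\mathcal{S}'$, which holds because the convolution of a tempered distribution with a compactly supported one is well defined); once this is in place, the commutation of the $\Delta$'s, the $\partial$'s and $\mathcal{F}$ recorded earlier in the section makes the factorization purely formal. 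The sign conventions in the distributional definition of $\Delta_h^s$ affect $M$ only through a harmless substitution $\xi_j\mapsto-\xi_j$ and do not alter the argument.
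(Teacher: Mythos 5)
Your proof is correct, and although it passes through the same Fourier-transform machinery as the paper, the mechanism is genuinely different. The paper handles $n=1$ by a support argument: from $\mathcal{D}f=0$ (with $\mathcal{D}=D^{(1,\dots,1)}$) it deduces $\xi_1\cdots\xi_d\,\mathcal{F}(f)=0$, hence $\mathrm{supp}\,\mathcal{F}(f)\subseteq\bigcup_{k}\{\xi_k=0\}$, and then asserts that $\prod_{k}(e^{i\xi_k}-1)\,\mathcal{F}(f)=0$ because the multiplier vanishes on that set; the general case follows by induction on $n$, using the commutation $\Gamma\mathcal{D}=\mathcal{D}\Gamma$ to pass from $\mathcal{D}^n f=0$ to $\mathcal{D}^{n-1}(\Gamma f)=0$. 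You instead factor the symbol once and for all, $e^{i\xi_j}-1=(i\xi_j)\phi(\xi_j)$ with $\phi(t)=\int_0^1 e^{ist}\,ds$, verify via $|\phi^{(k)}(t)|\le 1/(k+1)$ that $M(\xi)=\prod_{j}\phi(\xi_j)^n$ is a slowly increasing smooth multiplier on $\mathcal{S}'(\mathbb{R}^d)$, and obtain $\mathcal{F}(\Gamma^n f)=M(\xi)\,\mathcal{F}(D^{(n,\dots,n)}f)$ directly, with no induction and no support analysis. This buys more than economy: the paper's support step is delicate for distributions, since a smooth function vanishing on the support of a distribution need not annihilate it (for instance $x\,\delta_0'=-\delta_0\neq 0$), so ``the multiplier vanishes where $\mathcal{F}(f)$ is supported'' is not by itself a complete justification; what makes the paper's conclusion true is precisely your observation that $\prod_{k}(e^{i\xi_k}-1)$ is an $\mathcal{O}_M$-multiple of $\xi_1\cdots\xi_d$, so that $\prod_{k}(e^{i\xi_k}-1)\,\mathcal{F}(f)=i^d\bigl(\prod_{k}\phi(\xi_k)\bigr)\bigl(\xi_1\cdots\xi_d\,\mathcal{F}(f)\bigr)=0$, the associativity being legitimate because all scalar factors lie in $\mathcal{O}_M$. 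Your space-side variant $\Delta_{e_j}^1=\partial_{x_j}\circ C_j$, with $C_j$ convolution against the compactly supported indicator of $[-1,0]$ in the $j$-th variable, is equally valid (convolution of a tempered distribution with a compactly supported one stays in $\mathcal{S}'$) and even bypasses the Fourier transform; either way your argument is complete and in fact supplies the justification the paper's own key step quietly relies on.
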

\begin{proof}
$f\in \mathcal{S}'(\mathbb{R}^d)$ implies that $$\mathcal{F}(D^{\alpha}f)(\xi)= (i\xi)^{\alpha}\mathcal{F}(f)(\xi)$$ for every multi-index $\alpha$ and 
$$\mathcal{F}(\Delta_{h_0}f(x))(\xi)=\mathcal{F}(f(x+h_0)-f(x))(\xi)= (e^{i<h_0,\xi>}-1)\mathcal{F}(f)(\xi)$$ for every step $h_0\in\mathbb{R}^d$. Hence, if $\mathcal{D}=D^{(1,1,\cdots,1)}$ and $\mathcal{D}(f)=0$, then  
\begin{equation} \label{Fourier}
0=\mathcal{F}(\mathcal{D}(f)) (\xi)= (i \xi)^{(1,\cdots,1)} \mathcal{F}(f)(\xi)= (i)^d\xi_1\xi_2\cdots \xi_d  \mathcal{F}(f)(\xi).
\end{equation}
In particular, the support of $\mathcal{F}(f)$ is a subset of $V=\bigcup_{k=1}^d H_k$, where $H_k=\{\xi:  \xi_k=0\}$ is an hyperplane of $\mathbb{R}^d$ for every $k$.  
On the other hand, 
\begin{equation} \label{despla}
\mathcal{F}(\Gamma(f))(\xi) =\left(\prod_{k=1}^d(e^{i<e_k,\xi>}-1)\right)\mathcal{F}(f)(\xi) =\left( \prod_{k=1}^d(e^{i\xi_k}-1)\right)\mathcal{F}(f)(\xi) 
\end{equation}
and, evidently, if the support of $\mathcal{F}(f)$ is a subset of $V$, then $\prod_{k=1}^d(e^{i\xi_k}-1)\mathcal{F}(f)(\xi) =0$ (since $\mathcal{F}(f)$ vanishes on all points $\xi$  such that $\prod_{k=1}^d(e^{i\xi_k}-1)\neq 0$) and  $\mathcal{F}(\Gamma(f))(\xi) =0$, which implies $\Gamma(f)=0$. Thus, if $f\in \mathcal{S}'(\mathbb{R}^d)$ and  
$\mathcal{D}(f)=0 $, then $\Gamma(f)=0$, which is the case $n=1$ of the lemma. 
Assume the result  holds for $n-1$ and let $f\in \mathcal{S}'(\mathbb{R}^d)$ be such that 
$\mathcal{D}^nf=0$. Then 
\[
0=\mathcal{D}^nf= \mathcal{D}(\mathcal{D}^{n-1}f),
\]
so that 
\[
0=\Gamma(\mathcal{D}^{n-1}f) =\mathcal{D}^{n-1}(\Gamma f) 
\]
and the induction hypothesis implies that $\Gamma^{n-1}(\Gamma f)=0$, which is what we wanted to prove.
\end{proof}

We are now able to prove the following result:

\begin{corollary} \label{distributions}
Let $f\in \mathcal{S}'(\mathbb{R}^d)$ and let $H=h_1\mathbb{Z}+\cdots+h_s\mathbb{Z}$ be a dense subgroup of $\mathbb{R}^d$. Assume that $\Delta_{h_k}^{m+1}f=0$, $k=1,\cdots,s$. Then
there exists $f^*\in\Pi_{sm,\text{tot}}^d$ such that $f=f^*$ almost everywhere.
\end{corollary}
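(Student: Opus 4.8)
The plan is to pass to the Fourier side, where both hypotheses become transparent, and then to recover the \emph{sharp} degree by bootstrapping to the continuous case already settled in Corollaries~\ref{nuevo_montel_0} and~\ref{nuevo_montel}. First I would apply $\mathcal{F}$ to each of the $s$ equations $\Delta_{h_k}^{m+1}f=0$. Using the multiplier identity recorded in the proof of Lemma~\ref{operator}, namely $\mathcal{F}(\Delta_{h}f)(\xi)=(e^{i\langle h,\xi\rangle}-1)\mathcal{F}(f)(\xi)$, iteration gives $(e^{i\langle h_k,\xi\rangle}-1)^{m+1}\mathcal{F}(f)=0$ for each $k$. Since each multiplier is smooth, $\mathcal{F}(f)$ must vanish on the open set where the multiplier is nonzero, so $\mathrm{supp}\,\mathcal{F}(f)\subseteq Z_k:=\{\xi\in\mathbb{R}^d:\langle h_k,\xi\rangle\in 2\pi\mathbb{Z}\}$ for every $k$, and therefore $\mathrm{supp}\,\mathcal{F}(f)\subseteq\bigcap_{k=1}^s Z_k$.

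Next I would exploit the density hypothesis to collapse this intersection to the origin. A point $\xi$ lies in $\bigcap_k Z_k$ if and only if $\langle h_k,\xi\rangle\in 2\pi\mathbb{Z}$ for all $k$, which by additivity of $\langle\cdot,\xi\rangle$ and of $2\pi\mathbb{Z}$ is equivalent to $\langle h,\xi\rangle\in 2\pi\mathbb{Z}$ for every $h\in H$. Because $h\mapsto\langle h,\xi\rangle$ is continuous, $2\pi\mathbb{Z}$ is closed, and $H$ is dense in $\mathbb{R}^d$, this extends to $\langle y,\xi\rangle\in 2\pi\mathbb{Z}$ for all $y\in\mathbb{R}^d$; since $y\mapsto\langle y,\xi\rangle$ is linear, its image is a subspace of $\mathbb{R}$ contained in the discrete set $2\pi\mathbb{Z}$, hence $\{0\}$, which forces $\xi=0$. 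Thus $\mathrm{supp}\,\mathcal{F}(f)\subseteq\{0\}$.

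I would then convert this into a regularity statement. A tempered distribution supported at a point has finite order, so $\mathcal{F}(f)=\sum_{|\beta|\le L}c_\beta D^\beta\delta_0$ for some $L$. For any multi-index $\alpha$ with $|\alpha|=L+1$ one has $|\alpha|>|\beta|$ for every term, so some coordinate satisfies $\alpha_j>\beta_j$ and hence $\xi^\alpha D^\beta\delta_0=0$; therefore $(i\xi)^\alpha\mathcal{F}(f)=0$, i.e. $\mathcal{F}(D^\alpha f)=0$ and $D^\alpha f=0$ for all $\alpha$ with $|\alpha|=L+1$. By the regularity fact recalled just before Lemma~\ref{operator}, there exists $P\in\Pi_{L,\text{tot}}^d$ with $f=P$ almost everywhere. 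In particular $f$ agrees a.e. with a continuous (indeed polynomial) function.

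Finally I would pin down the degree by returning to the combinatorial case. Since $f=P$ a.e. and $\Delta_{h_k}^{m+1}f=0$ as distributions, the continuous function $P$ satisfies $\Delta_{h_k}^{m+1}P=0$ pointwise for every $k$ (for a continuous function the distributional and pointwise difference operators coincide). Applying Corollary~\ref{nuevo_montel_0} to $P$ gives $\Delta_h^{sm+1}P=0$ for all $h\in H$, and continuity together with the density of $H$ upgrades this to all $h\in\mathbb{R}^d$; as in Corollary~\ref{nuevo_montel} this yields $P\in\Pi_{sm,\text{tot}}^d$ (applying the argument to $\mathrm{Re}\,P$ and $\mathrm{Im}\,P$ if one prefers the real-valued statement). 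Setting $f^*:=P$ completes the proof. I expect the main obstacle to be the third step: the passage from ``$\mathrm{supp}\,\mathcal{F}(f)=\{0\}$'' to ``$f$ is a.e. a polynomial'' must be made rigorous through the finite-order/point-support structure, and, crucially, the Fourier argument by itself only delivers \emph{some} finite degree $L$, so obtaining the sharp bound $sm$ genuinely requires feeding the continuous representative back into the interpolation-based Corollaries~\ref{nuevo_montel_0}--\ref{nuevo_montel} rather than reading the degree off the distributional computation.
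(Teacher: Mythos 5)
Your proof is correct, but it follows a genuinely different route from the paper's. The paper's proof invokes the structure theorem for tempered distributions to write $f=D^{(n,\cdots,n)}F$ with $F$ continuous and slowly growing, commutes $\Delta_{h_k}^{m+1}$ with $D^{(n,\cdots,n)}$, and then uses Lemma \ref{operator} to replace the differential operator by the difference operator $\Gamma^n=(\Delta_{e_1}^1\cdots\Delta_{e_d}^1)^n$, so that Corollary \ref{nuevo_montel} can be applied to the continuous function $\Gamma^nF$; from $\Gamma^nF$ being a polynomial the authors infer that $F$ is $C^{(\infty)}$, hence that $f$ agrees a.e.\ with a continuous solution, to which Corollary \ref{nuevo_montel} is applied once more. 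You instead act with the Fourier transform directly on the hypotheses $\Delta_{h_k}^{m+1}f=0$: the multipliers $(e^{i\langle h_k,\xi\rangle}-1)^{m+1}$ force $\mathrm{supp}\,\mathcal{F}(f)\subseteq\bigcap_k\{\xi:\langle h_k,\xi\rangle\in 2\pi\mathbb{Z}\}$, and your density argument collapsing this intersection to $\{0\}$ is exactly right (the image of the linear form $y\mapsto\langle y,\xi\rangle$ is a subspace of $\mathbb{R}$ inside $2\pi\mathbb{Z}$, hence trivial). The point-support structure theorem then gives $\mathcal{F}(f)=\sum_{|\beta|\le L}c_\beta D^\beta\delta_0$, your computation that $\xi^\alpha D^\beta\delta_0=0$ whenever $\alpha\not\le\beta$ is valid, and the regularity fact recalled before Lemma \ref{operator} yields $f=P$ a.e.\ for some polynomial $P$. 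Your final bootstrap is also the right move and is needed: the support argument alone only bounds the degree by some $L$ depending on the order of $f$, and feeding the continuous representative $P$ back into Corollaries \ref{nuevo_montel_0}--\ref{nuevo_montel} (after noting that a continuous function vanishing as a distribution vanishes pointwise) is what produces the sharp bound $sm$. What your route buys is self-containedness and transparency: you bypass the representation $f=D^{(n,\cdots,n)}F$ and the operator $\Gamma$ entirely, and in particular you avoid the paper's somewhat delicate inference that $F\in C^{(\infty)}$ follows from $\Gamma^nF$ being a polynomial; what the paper's route buys is that it reuses Lemma \ref{operator}, which the authors develop anyway, and stays within the circle of difference-operator arguments rather than appealing to the classification of distributions supported at a point.
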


\begin{proof}
Let $f\in \mathcal{S}'(\mathbb{R}^d)$ satisfy the hypotheses of this corollary and let us take $n\in\mathbb{N}$ and $F:\mathbb{R}^d\to\mathbb{C}$ a continuous slowly growing function such that  $f=D^{(n,n,\cdots,n)}F$. Then, for $1\leq k\leq s$, we have that
\begin{eqnarray*}
0 &=& \Delta_{h_k}^{m+1}f \\
&=&  \Delta_{h_k}^{m+1}D^{(n,n,\cdots,n)} F=  D^{(n,n,\cdots,n)}(\Delta_{h_k}^{m+1}F)
\end{eqnarray*}
Hence, Lemma \ref{operator} implies that 
$$0=\Gamma^n(\Delta_{h_k}^{m+1}F)=\Delta_{h_k}^{m+1}(\Gamma^nF),\text{ for all } 1\leq k\leq s,$$
and since $\Gamma^n(F)$ is continuous, we can apply  Corollary \ref{nuevo_montel} to $\Gamma^n(F)$ to conclude that $\Gamma^n(F)$ is a polynomial. In particular, $F$ is of class $C^{(\infty)}$ and $f=D^{(n,n,\cdots,n)} F$  in distributional sense, which implies that $f$ is equal almost everywhere to a continuous function $f^*$ and this function $f^*$ satisfies $\Delta_{h_k}^{m+1}f^*=0$, $k=1,\cdots,s$ in the classical sense. Thus,  if we apply  Corollary \ref{nuevo_montel} to $f^*$ we conclude that $f^*\in\Pi_{sm,\text{tot}}^d$ and $f=f^*$ almost everywhere. This concludes the proof.

\end{proof}

Obviously, we can resume all results proved in this section with the statement of the following  generalized version of Montel's Theorem:

 \begin{theorem} \label{MG}
We suppose that $s$ is a positive integer, and either of the following possibilities holds:
\begin{enumerate}
\item $G$ is a finitely generated Abelian group with generators $h_1,\dots,h_s$, and \hbox{$f:G\to\mathbb{C}$}  is a function.
\item $G$ is a topological Abelian group, in which the elements $h_1,\dots,h_s$ generate a dense subgroup in $G$, and  $f:G\to\mathbb{C}$ is a continuous function.\end{enumerate}
If  $f$ satisfies
 \begin{equation} \label{para_dist}
 \Delta_{h_k}^{m+1}f=0
 \end{equation}
 for $k=1,2,\dots,s$, then $f$ is a polynomial of total degree $\leq sm$ on $G$. Furthermore, if $G=\mathbb{R}^d$,  $sm$ is the best possible. Finally, if 
$G=\mathbb{R}^d$,  the elements $h_1,\dots,h_s$ generate a dense subgroup of $\mathbb{R}^d$, and $f$ is a complex valued tempered distribution on $\mathbb{R}^d$ which satisfies \eqref{para_dist}, then $f=p$ almost everywhere for some $p\in \Pi_{sm,\text{tot}}^d$. 
\end{theorem}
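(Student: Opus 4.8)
The plan is to assemble the statement from the results already established in this section, handling its three assertions separately: the conclusion that $f$ is a polynomial of total degree $\leq sm$ (under hypothesis (1) or (2)), the optimality of the bound $sm$ when $G=\mathbb{R}^d$, and the distributional version. For the topological case (2), I would simply observe that the conclusion is precisely Corollary \ref{nuevo_montel}: the elements $h_1,\dots,h_s$ generate a dense subgroup $H$ of $G$, $f$ is continuous, and $\Delta_{h_k}^{m+1}f=0$ for every $k$, so that corollary immediately yields that $f$ is a continuous polynomial on $G$ of total degree $\leq sm$.

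For the algebraic case (1), the key observation is that since $h_1,\dots,h_s$ generate the finitely generated Abelian group $G$, the subgroup $H=h_1\mathbb{Z}+\cdots+h_s\mathbb{Z}$ coincides with $G$ itself. Applying Corollary \ref{nuevo_montel_0}, whose proof rests only on Lemma \ref{MP_lem1} and the tensor-product interpolation construction and uses no topology, then gives $\Delta_h^{sm+1}f(x)=0$ for every $h\in H=G$ and every $x\in G$. Thus $f$ is a polynomial function of degree $\leq sm$ on $G$, and by the cited fact that on a finitely generated Abelian group every polynomial function is a polynomial \cite{laszlo1}, $f$ admits a representation $f(x)=P(a_1(x),\dots,a_t(x))$ with each $a_k$ additive and $P$ of total degree $\leq sm$.

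The remaining two assertions are direct citations. The optimality of $sm$ for $G=\mathbb{R}^d$ is exactly the content of the Remark following Corollary \ref{nuevo_montel}: for each $s\geq d+1$ one exhibits generators $h_1,\dots,h_s$ of a dense subgroup of $\mathbb{R}^d$ together with a function $f$ annihilated by every $\Delta_{h_k}^{m+1}$ for which $\Delta_{h_1+\cdots+h_s}^{sm}f(0)=(sm)!\neq 0$, so the degree $sm$ cannot be lowered. The distributional assertion is precisely Corollary \ref{distributions}: a tempered distribution $f$ on $\mathbb{R}^d$ with $\Delta_{h_k}^{m+1}f=0$ and $H$ dense equals, almost everywhere, some $p\in\Pi_{sm,\text{tot}}^d$.

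The only point requiring genuine care is the degree bookkeeping in case (1): one must verify that the bound ``degree $\leq sm$'' obtained for the polynomial function passes intact to the total degree of the polynomial representation furnished by the structure theorem of \cite{laszlo1}. This is the single place where an assembled result needs a small additional verification rather than a verbatim citation; everything else follows directly from the corollaries proved above, which is why I expect the proof to consist essentially of organizing these references rather than of any new computation.
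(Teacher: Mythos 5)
Your proposal is correct and takes essentially the same route as the paper: Theorem \ref{MG} is stated there without a separate argument, explicitly as a summary assembled from Corollary \ref{nuevo_montel_0} (whose proof, as you observe, uses no topology and hence settles the finitely generated case via the cited fact from \cite{laszlo1}), Corollary \ref{nuevo_montel} for the topological case, the Remark following it for optimality of $sm$, and Corollary \ref{distributions} for the distributional assertion. Your flag about the degree bookkeeping when passing from ``polynomial function of degree $\leq sm$'' to ``polynomial of total degree $\leq sm$'' in case (1) is a point the paper leaves implicit, but it does not alter the approach.
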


Note that, with completely different techniques, similar results have been recently demonstrated by Almira \cite{A_NFAO}, Almira-Abu Helaiel \cite{AK_CJM} and Almira-Sz\'{e}kelyhidi \cite{A_L_montel}. 

\section{Montel-Popoviciu theorem in several variables setting}

In this section we prove a result of Popoviciu's type for functions defined on the Euclidean space $\mathbb{R}^d$ for $d>1$. We begin by  a technical lemma showing that  every polynomial $P\in\Pi_{m,\max}^s$ can be decomposed as a special sum involving  polynomials of the form $A_{k}(t_1+\theta_1t_s,t_2+\theta_2t_s,\cdots,t_{s-1}+\theta_{s-1}t_s)$, with   $A_k\in \Pi_{(s-1)m,\max}^{s-1}$ and  $k=0,\cdots,sm$.

\begin{lemma}
Let $\{\theta_1,\cdots,\theta_{s-1}\}\subset \mathbb{R}\setminus \{0\}$. Then every polynomial $P\in\Pi_{m,\max}^s$ can be decomposed as a sum of the form
\[
 P(t_1,\cdots,t_s)=\sum_{k=0}^{sm}A_{k}(t_1+\theta_1t_s,t_2+\theta_2t_s,\cdots,t_{s-1}+\theta_{s-1}t_s)t_s^{k}
 \]
where $A_k\in \Pi_{(s-1)m,\max}^{s-1}$ for $k=0,\cdots,sm$.
\end{lemma}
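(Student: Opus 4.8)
The plan is to reduce the statement to an invertible linear change of variables followed by elementary bookkeeping of degrees. First I would introduce the new coordinates
\[
u_j = t_j + \theta_j t_s \quad (1\le j\le s-1), \qquad u_s = t_s.
\]
The associated matrix is triangular with $1$'s on the diagonal, hence invertible, with inverse $t_j = u_j - \theta_j u_s$ for $1\le j\le s-1$ and $t_s = u_s$. (Note that invertibility does not even use the hypothesis $\theta_j\neq 0$, so that assumption is not needed for this particular lemma.) Setting
\[
Q(u_1,\ldots,u_s) := P(u_1-\theta_1 u_s,\ \ldots,\ u_{s-1}-\theta_{s-1}u_s,\ u_s),
\]
we obtain a polynomial $Q$ in $u_1,\ldots,u_s$ whose substitution back in the $u_j$ recovers $P$.

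Next I would simply expand $Q$ as a polynomial in the single variable $u_s$, absorbing the other variables into the coefficients:
\[
Q(u_1,\ldots,u_s) = \sum_{k} A_k(u_1,\ldots,u_{s-1})\, u_s^{k}.
\]
Re-substituting $u_j = t_j+\theta_j t_s$ and $u_s = t_s$ then yields exactly the claimed expression $P(t_1,\ldots,t_s)=\sum_k A_k(t_1+\theta_1 t_s,\ldots,t_{s-1}+\theta_{s-1}t_s)\,t_s^{k}$. Everything that remains is to verify the two degree ranges asserted in the statement.

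For the range of the index $k$: since $P\in\Pi_{m,\max}^s$, every monomial of $P$ has total degree $\le sm$, and a linear substitution does not raise total degree, so $\deg Q\le sm$; in particular $\deg_{u_s}Q\le sm$, forcing $A_k=0$ for $k>sm$ and giving the sum over $k=0,\ldots,sm$. For the coefficients $A_k$, I would fix $j\in\{1,\ldots,s-1\}$ and observe that in the expansion $Q(u)=\sum a_{i_1\cdots i_s}(u_1-\theta_1 u_s)^{i_1}\cdots(u_{s-1}-\theta_{s-1}u_s)^{i_{s-1}}u_s^{i_s}$ the variable $u_j$ appears only through the factor $(u_j-\theta_j u_s)^{i_j}$ with $i_j\le m$, so $\deg_{u_j}Q\le m$ and therefore $\deg_{u_j}A_k\le m$ for every $k$. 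Consequently each $A_k\in\Pi_{m,\max}^{s-1}\subseteq\Pi_{(s-1)m,\max}^{s-1}$, which is what we want. The only ``obstacle'' here is this degree accounting, and it is entirely routine; in fact the argument proves the stronger fact that each $A_k$ has degree $\le m$ (rather than merely $\le (s-1)m$) in each of its $s-1$ variables, so the bound in the statement is a safe over-estimate.
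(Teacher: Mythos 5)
Your proof is correct and takes essentially the same route as the paper's: the identical triangular change of variables $u_j=t_j+\theta_j t_s$, $u_s=t_s$, followed by expansion in powers of $u_s$ with the coefficients absorbed into the $A_k$. Your degree accounting is in fact slightly sharper than the paper's, since your observation that $u_j$ enters only through $(u_j-\theta_j u_s)^{i_j}$ yields $A_k\in\Pi_{m,\max}^{s-1}$ (and correctly notes that $\theta_j\neq 0$ is never used), whereas the paper only records the weaker, but sufficient, bound $A_k\in\Pi_{(s-1)m,\max}^{s-1}$.
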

\begin{proof}
Let $P(t_1,\cdots, t_s) =  \sum_{i_1=0}^m\sum_{i_2=0}^m\cdots \sum_{i_s=0}^ma_{i_1,i_2,\cdots,i_s} t_1^{i_1}\cdots t_s^{i_s}$ and let us consider   the change of variables given by  $f_1=t_1+\theta_1t_s,\cdots,f_{s-1}=t_{s-1}+\theta_{s-1}t_s$ and $f_s=t_s$. Then $t_k=f_k-\theta_kf_s$ for all $1\leq k\leq s-1$, and $f_s=t_s$, so that
\begin{eqnarray*}
P(t_1,\cdots, t_s) &= & \sum_{i_1=0}^m\sum_{i_2=0}^m\cdots \sum_{i_s=0}^ma_{i_1,i_2,\cdots,i_s}(f_1-\theta_1f_s)^{i_1}\cdots (f_{s-1}-\theta_{s-1}f_s)^{i_{s-1}} (f_s)^{i_s}\\
&=&  \sum_{k=0}^{sm}A_{k}(f_1,f_2,\cdots,f_{s-1})f_s^{k}\\
&=&  \sum_{k=0}^{sm}A_{k}(t_1+\theta_1t_s,t_2+\theta_2t_s,\cdots,t_{s-1}+\theta_{s-1}t_s)t_s^{k},
\end{eqnarray*}
where $A_k(f_1,\cdots,f_{s-1})$ is a polynomial of $s-1$ variables with degree at most $(s-1)m$ in each one of them, for $k=0,\cdots,sm$.
\end{proof}

\begin{theorem}[Characterization of polynomials of the form $A(t_1+\theta_1t_s,t_2+\theta_2t_s,\cdots,t_{s-1}+\theta_{s-1}t_s)$] \label{PoVV}
Let $\{\theta_1,\cdots,\theta_{s-1}\}\subset \mathbb{R}\setminus \{0\}$ and let $P(t_1,\cdots,t_s)\in\Pi_{m,\text{max}}^{s}$. Then
\[
P(t_1,\cdots,t_s)=A_{0}(t_1+\theta_1t_s,t_2+\theta_2t_s,\cdots,t_{s-1}+\theta_{s-1}t_s),
\]
with $A_0\in \Pi_{(s-1)m,\max}^{s-1}$ if and only if there exists $W\subseteq \mathbb{R}^{s-1}$, a correct interpolation set for $ \Pi_{(s-1)m,\max}^{s-1}$, such that,  for all $\alpha=(\alpha_{1},\cdots,\alpha_{s-1}) \in W$, there exists a sequence of vectors   $\{(u_{1,n},\cdots,u_{s-1,n}, u_{s,n})\}_{n=1}^{\infty}$ satisfying the following three conditions:
\begin{itemize}
\item[$(i)$] $u_{j,n}+\theta_ju_{s,n}\to \alpha_{j}$  when  $n\to\infty$, for  $1\leq j\leq s-1$.
\item[$(ii)$] $|u_{s,n}|\to\infty$ for $n\to\infty$, and
\item[$(iii)$] $\{P(u_{1,n},u_{2,n},\cdots,u_{s-1,n}, u_{s,n})\}_{n=1}^\infty$ is bounded.
\end{itemize}
\end{theorem}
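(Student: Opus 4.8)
The plan is to prove the two implications separately, using the decomposition from the preceding lemma as the main structural tool, together with the defining property of correct interpolation sets recalled at the start of this section.

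For the direct implication, suppose $P=A_{0}(t_1+\theta_1t_s,\cdots,t_{s-1}+\theta_{s-1}t_s)$ with $A_0\in\Pi_{(s-1)m,\max}^{s-1}$. I would simply fix any correct interpolation set $W$ for $\Pi_{(s-1)m,\max}^{s-1}$ (for instance a rectangular grid, which is admissible by tensor product interpolation), and for each $\alpha=(\alpha_1,\cdots,\alpha_{s-1})\in W$ exhibit the explicit sequence $u_{s,n}=n$ and $u_{j,n}=\alpha_j-\theta_j n$ for $1\leq j\leq s-1$. Then $u_{j,n}+\theta_j u_{s,n}=\alpha_j$ identically, so $(i)$ holds (with equality), $(ii)$ holds since $|u_{s,n}|=n\to\infty$, and $P(u_{1,n},\cdots,u_{s,n})=A_0(\alpha_1,\cdots,\alpha_{s-1})$ is constant in $n$, so $(iii)$ holds. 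This settles the easy direction.

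For the converse I would apply the preceding lemma to write $P(t_1,\cdots,t_s)=\sum_{k=0}^{sm}A_k(t_1+\theta_1t_s,\cdots,t_{s-1}+\theta_{s-1}t_s)\,t_s^{k}$ with each $A_k\in\Pi_{(s-1)m,\max}^{s-1}$, the goal being to show $A_k\equiv 0$ for every $k\geq 1$, which yields exactly the desired form. The heart of the argument is a finite downward induction on $k$ from $sm$ down to $1$, whose inductive hypothesis is the \emph{identical} vanishing $A_j\equiv 0$ of all higher coefficients $j>k$. Fixing $\alpha\in W$ together with its sequence and writing $f_{j,n}=u_{j,n}+\theta_j u_{s,n}\to\alpha_j$, evaluation of the decomposition along the sequence gives $P(u_{1,n},\cdots,u_{s,n})=\sum_{j=0}^{sm}A_j(f_{1,n},\cdots,f_{s-1,n})\,u_{s,n}^{\,j}$. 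By the inductive hypothesis the terms with $j>k$ vanish identically, so the sum truncates at $j=k$; moreover each coefficient $A_j(f_{1,n},\cdots,f_{s-1,n})$ converges to $A_j(\alpha)$ by continuity and is therefore bounded in $n$. If $A_k(\alpha)\neq 0$, the leading term has magnitude comparable to $|u_{s,n}|^{k}$ while the remaining terms are $O(|u_{s,n}|^{\,k-1})$, so $(ii)$ forces $|P(u_{1,n},\cdots,u_{s,n})|\to\infty$, contradicting the boundedness in $(iii)$. Hence $A_k(\alpha)=0$ for every $\alpha\in W$; since $A_k\in\Pi_{(s-1)m,\max}^{s-1}$ and $W$ is a correct interpolation set for that space, it follows that $A_k\equiv 0$, closing the induction.

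The step I expect to be delicate is precisely the passage from the pointwise vanishing $A_k(\alpha)=0$, obtained from a single sequence, to the identical vanishing $A_k\equiv 0$ \emph{before} descending to the next level. Without upgrading it, the higher-order coefficients would only be known to tend to zero rather than to vanish exactly, and their products with the diverging powers $u_{s,n}^{\,j}$ could fail to be negligible, so that the naive ``leading term dominates'' estimate would be invalid (indeed a bounded combination $\sum a_{j,n}u_{s,n}^{\,j}$ with $a_{j,n}\to 0$ can hide a nonzero lower-order limit). Running the induction through all of $W$ at each level and invoking the correct-interpolation-set property is exactly what removes this obstruction and makes the estimate legitimate.
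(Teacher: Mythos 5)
Your proof is correct, and its overall skeleton matches the paper's: both use the preceding decomposition lemma to write $P=\sum_k A_k(t_1+\theta_1t_s,\cdots,t_{s-1}+\theta_{s-1}t_s)t_s^k$, both derive a contradiction between divergence along the given sequences and the boundedness in $(iii)$, and both invoke the correct-interpolation-set property to pass from pointwise information on $W$ to identities in $\Pi_{(s-1)m,\max}^{s-1}$. The differences are in the packaging. For the divergence step the paper proves two auxiliary lemmas (Lemmas \ref{MP_ceros_pol} and \ref{MP_poli_banda}), bounding the zeros of a polynomial and of all its small coefficient perturbations inside a fixed ball $B_M$, and then estimating $|q_n(u_{s,n})|\geq \frac{|a_N|}{2}\,\mathrm{dist}(u_{s,n},B_M)^N\to\infty$ via the factorization over roots; your direct triangle-inequality estimate (leading term $\gtrsim |u_{s,n}|^k$ versus lower-order terms $O(|u_{s,n}|^{k-1})$ with bounded coefficients) achieves the same conclusion more elementarily and lets you skip both lemmas. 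Structurally, the paper performs the contradiction once: it takes $N$ to be the maximal index with $A_N\not\equiv 0$, picks a single $\alpha\in W$ with $A_N(\alpha)\neq 0$ (which exists precisely because $W$ is a correct interpolation set), and concludes $N=0$; your downward induction over $k=sm,\dots,1$, upgrading $A_k(\alpha)=0$ for all $\alpha\in W$ to $A_k\equiv 0$ at each level, is logically equivalent but runs the argument at every level and every $\alpha$. The ``delicate point'' you flag --- that higher coefficients must vanish identically, not merely in the limit, before the dominant-term estimate is legitimate --- is real, and the paper neutralizes it by the very definition of $N$ as the top nonvanishing index, while you neutralize it by the interpolation-set upgrade inside the induction; both are sound. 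Finally, your necessity argument (the explicit sequence $u_{s,n}=n$, $u_{j,n}=\alpha_j-\theta_j n$, along which $P$ is constant) is actually more complete than the paper's one-line appeal to boundedness on the strips $\Gamma_{a,b}$, which verifies $(iii)$ but leaves the construction of sequences satisfying $(i)$ and $(ii)$ implicit.
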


To prove Theorem \ref{PoVV} we need first to state some technical results:

\begin{lemma} \label{MP_ceros_pol}
Let $p(z)=a_0+a_1z+\cdots+a_Nz^N\in \mathbb{C}[z]$ be an ordinary polynomial of degree $N$ (i.e., $a_N\neq 0$) and let  $\xi\in\mathbb{C}$ be a zero of $p$.Then
\[
|\xi|\leq \text{max}\{1,\sum_{k=0}^{N-1}\frac{|a_k|}{|a_N|}\}.
\]
\end{lemma}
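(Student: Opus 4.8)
The plan is to exploit the defining relation $p(\xi)=0$ to isolate the leading term and then apply the triangle inequality. Since $\xi$ is a root, we have $a_N\xi^N=-\sum_{k=0}^{N-1}a_k\xi^k$, and taking moduli gives
\[
|a_N|\,|\xi|^N\le\sum_{k=0}^{N-1}|a_k|\,|\xi|^k.
\]
From here the whole argument reduces to a dichotomy according to whether $|\xi|\le 1$ or $|\xi|>1$, which is exactly the reason the bound in the statement is phrased as a maximum against $1$.

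First I would dispose of the easy case: if $|\xi|\le 1$, then trivially $|\xi|\le 1\le\max\{1,\sum_{k=0}^{N-1}|a_k|/|a_N|\}$ and there is nothing more to prove.

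The substantive case is $|\xi|>1$. Here I would use the monotonicity of the powers: since $|\xi|>1$, one has $|\xi|^k\le|\xi|^{N-1}$ for every $0\le k\le N-1$, so the inequality above becomes
\[
|a_N|\,|\xi|^N\le|\xi|^{N-1}\sum_{k=0}^{N-1}|a_k|.
\]
Dividing both sides by $|a_N|\,|\xi|^{N-1}$, which is legitimate because $a_N\neq 0$ and $|\xi|>1>0$, yields $|\xi|\le\sum_{k=0}^{N-1}|a_k|/|a_N|$, and this is again bounded above by the right-hand side of the claimed estimate.

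As for difficulty, there is essentially no obstacle here: this is the classical Cauchy bound on the moduli of the zeros of a polynomial, and the only point requiring a modicum of care is to keep the two cases separate, so that the division by $|\xi|^{N-1}$ is invoked only under the hypothesis $|\xi|>1$. I expect the lemma to serve merely as a uniform a~priori control on the roots of the auxiliary one-variable polynomials arising in the proof of Theorem~\ref{PoVV}, which is why only the crude estimate above, rather than a sharp one, is needed.
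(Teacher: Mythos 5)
Your proof is correct and follows essentially the same route as the paper: both split into the trivial case $|\xi|\le 1$ and the case $|\xi|>1$, isolate the leading term via $p(\xi)=0$, apply the triangle inequality with the bound $|\xi|^k\le|\xi|^{N-1}$, and divide by $|a_N|\,|\xi|^{N-1}$ (the paper merely normalizes to a monic polynomial $q=p/a_N$ first, which is a cosmetic difference). Your closing remark about the lemma's role is also accurate --- it feeds into Lemma~\ref{MP_poli_banda} and hence Theorem~\ref{PoVV} as a crude a~priori bound on roots.
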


\begin{proof}
This is a well known fact, but  we include the proof for the sake of completeness. If $|\xi|\leq 1$ we are done. Thus, let us assume $|\xi|>1$. Obviously, $q(z)=\frac{1}{|a_N|}p(z)= \sum_{k=0}^{N-1}\frac{a_k}{a_N}z^k+z^N$ satisfies $q(\xi)=0$. Hence
\begin{eqnarray*}
|\xi|^N=\left| \sum_{k=0}^{N-1}\frac{a_k}{a_N}\xi^k\right| \leq \sum_{k=0}^{N-1}\frac{|a_k|}{|a_N|}\max\{1,|\xi|,\cdots,|\xi|^{N-1}\} =  \left(\sum_{k=0}^{N-1}\frac{|a_k|}{|a_N|}\right)|\xi|^{N-1}.
\end{eqnarray*}
It follows that, in this case, $|\xi|\leq \sum_{k=0}^{N-1}\frac{|a_k|}{|a_N|}$, which is what we wanted to prove.
\end{proof}

\begin{lemma}\label{MP_poli_banda}
Let $p(z)=a_0+a_1z+\cdots+a_Nz^N\in \mathbb{C}[z]$ be an ordinary polynomial of degree $N$ (i.e., $a_N\neq 0$) and assume that $N\geq 1$.  Let $\{q_n(z)\}_{n=1}^\infty$ be a sequence of ordinary polynomials of degree $\leq N$, $$q_n(z)=a_{0n}+a_{1n}z+\cdots+a_{Nn}z^N,$$
and assume that $$\text{max}\{|a_k-a_{kn}|:k=0,1,\cdots,N\}<|a_N|/2,\ \ \  n=1,2,\cdots,\infty.$$
If  $|w_n|\to+\infty$, then  $|q_n(w_n)|\to\infty$.
\end{lemma}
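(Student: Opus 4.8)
The plan is to exploit the hypothesis $\max\{|a_k-a_{kn}|:0\le k\le N\}<|a_N|/2$ to obtain estimates on the polynomials $q_n$ that are completely uniform in $n$; this uniformity is the whole point, since the evaluation points $w_n$ move with $n$, so a bound depending on $n$ would be useless. First I would record two consequences of the hypothesis. On one hand, the reverse triangle inequality gives $|a_{Nn}|\ge |a_N|-|a_N-a_{Nn}|>|a_N|/2>0$, so every $q_n$ genuinely has degree $N$ and its leading coefficient is bounded away from $0$ uniformly. On the other hand, $|a_{kn}|\le |a_k|+|a_N|/2$ for every $k$, so all coefficients of all the $q_n$ are dominated by the single constant $M:=\max_{0\le k\le N}|a_k|+|a_N|/2$.

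Next I would localize the zeros of the $q_n$. Applying Lemma \ref{MP_ceros_pol} to each $q_n$ and inserting the two bounds just obtained, every zero $\xi$ of every $q_n$ satisfies
\[
|\xi|\le \max\Big\{1,\ \sum_{k=0}^{N-1}\frac{|a_{kn}|}{|a_{Nn}|}\Big\}\le \max\Big\{1,\ \frac{2NM}{|a_N|}\Big\}=:R,
\]
so all of these zeros lie in the fixed disk $\{|z|\le R\}$, with $R$ independent of $n$.

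Finally I would factor $q_n(z)=a_{Nn}\prod_{j=1}^N(z-\xi_{j,n})$ and estimate, for any $w_n$ with $|w_n|>R$,
\[
|q_n(w_n)|=|a_{Nn}|\prod_{j=1}^N|w_n-\xi_{j,n}|\ge \frac{|a_N|}{2}\,(|w_n|-R)^N,
\]
using $|w_n-\xi_{j,n}|\ge |w_n|-|\xi_{j,n}|\ge |w_n|-R$ together with $|a_{Nn}|>|a_N|/2$. Since $|w_n|\to+\infty$, eventually $|w_n|>R$, and then the right-hand side tends to $+\infty$ because $N\ge 1$; hence $|q_n(w_n)|\to\infty$, as claimed. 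The only point requiring care is that each of the three displayed bounds is genuinely uniform in $n$, which is precisely what the assumption on the coefficients guarantees; beyond that the argument is a routine application of the preceding lemma and the triangle inequality.
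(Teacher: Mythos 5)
Your proof is correct and follows essentially the same route as the paper's: both use Lemma \ref{MP_ceros_pol} together with the coefficient hypothesis to confine all zeros of all the $q_n$ to a single disk of radius independent of $n$, then factor $q_n(z)=a_{Nn}\prod_{j=1}^N(z-\xi_{j,n})$ and bound $|q_n(w_n)|\ge \frac{|a_N|}{2}(|w_n|-R)^N\to\infty$. The only (cosmetic) difference is that you majorize the sum $\sum_{k=0}^{N-1}|a_{kn}|/|a_{Nn}|$ by the cleaner constant $2NM/|a_N|$, whereas the paper carries the sum itself.
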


\begin{proof} Let $n\in\mathbb{N}$ and let $\xi$ be a zero of $q_n(z)$. Then
\[
|\xi|\leq \text{max}\{1,\sum_{k=0}^{N-1}\frac{|a_{kn}|}{|a_{Nn}|}\},
\]
and, since $|a_{kn}|\leq |a_{kn}-a_k|+|a_k|\leq \frac{|a_N|}{2}+|a_k|$, $|a_{Nn}|\geq \frac{|a_N|}{2}$, we conclude that
\[
|\xi|\leq \text{max}\{1,\sum_{k=0}^{N-1}\frac{2(\frac{|a_N|}{2}+|a_k|)}{|a_{N}|}\}=:M.
\]
Thus, all zeroes of $q_n(z)$ belong to $B_M=\{z\in\mathbb{C}:|z|\leq M\}$ (for all $n$).

If $|w_n|\to\infty$, then $\mathbf{\text{dist}}(w_n,B_M)\to\infty$.

On the other hand, if $\{\alpha_{kn}\}_{k=1}^N$ denotes the set of zeroes of  $q_n(z)$, then $$q_n(z)=a_{Nn}\prod_{k=1}^N(z-\alpha_{kn}),$$
so that
\[
|q_n(w_n)|=|a_{Nn}|\prod_{k=1}^N|w_n-\alpha_{kn}|\geq \frac{|a_N|}{2}(\text{dist}(w_n,B_M))^N\to\infty. \ \ (n\to\infty).
\]
\end{proof}

\begin{proof}[Proof of Theorem \ref{PoVV}] The necessity is obvious, since every polynomial of the form
\[
P(t_1,\cdots,t_s) =  A_{0}(t_1+\theta_1t_s,t_2+\theta_2t_s,\cdots,t_{s-1}+\theta_{s-1}t_s),
\]
with $A_0\in\Pi_{(s-1)m,\max}^{s-1}$, is uniformly bounded on strips of the form $$\Gamma_{a,b}=\{(t_1,\cdots,t_s):a_k\leq t_k+\theta_kt_s\leq b_k,\ k=1,\cdots,s-1\},$$ where $\{a=(a_1,\cdots,a_{s-1}),b=(b_1,\cdots,b_{s-1})\}\subset \mathbb{R}^{s-1}$.

Let us now prove the sufficiency. Let $P(t_1,\cdots,t_s)\in \Pi_{m,\max}^s$. Then, for a certain $N\leq sm$, $P$ admits a representation of the form
$$P(t_1,\cdots,t_s) =  \sum_{k=0}^{N}A_{k}(t_1+\theta_1t_s,t_2+\theta_2t_s,\cdots,t_{s-1}+\theta_{s-1}t_s)t_s^{k},$$
where $A_k$ is a polynomial of $s-1$ variables with degree at most $(s-1)m$ in each one of them, for $0\leq k\leq N$, and $A_N\neq 0$.   We must prove $N=0$.

Assume, on the contrary, that  $N>0$.


By hypothesis, $A_N(\alpha)\neq 0$ for a certain $\alpha=(\alpha_1,\cdots,\alpha_{s-1})\in W$, since $A_N\in \Pi_{(s-1)m,max}^{s-1}\setminus\{0\}$ and $W$ is a correct interpolation set for $\Pi_{(s-1)m,max}^{s-1}$. Consider the polynomial
\[
p(z)=\sum_{i=0}^{N}A_i(\alpha)z^i
\]
Now, the functions  $A_i(t_1,\cdots,t_{s-1})$ are continuous and  $\{u_{j,n}+\theta_ju_{s,n}\}\to \alpha_j$ for
$n\to \infty$ and $1\leq j\leq s-1$, so that
$\{A_{i}(u_{1,n}+\theta_1u_{s,n},u_{2,n}+\theta_2u_{s,n},\cdots, u_{s-1,n}+\theta_{s-1}u_{s,n})\}\to A_i(\alpha)$ for $i\in \{0,1,\cdots, N\}$. Thus, we can assume with no loss of generality, that  $|A_{i}(u_{1,n}+\theta_1u_{s,n},u_{2,n}+\theta_2u_{s,n},\cdots, u_{s-1,n}+\theta_{s-1}u_{s,n})-A_i(\alpha)|<|A_N(\alpha)|/2$, $i=0,1,\cdots,N$,
$n\in\mathbb{N}$.

Hence, if we consider the sequence of polynomials
\[
q_n(z)=\sum_{i=0}^{N}A_{i}(u_{1,n}+\theta_1u_{s,n},u_{2,n}+\theta_2u_{s,n},\cdots, u_{s-1,n}+\theta_{s-1}u_{s,n})z^i, \ n=1,2,\cdots,
\]
then Lemma \ref{MP_poli_banda} implies that  $|q_n(u_{s,n})|\to\infty$ for $n\to\infty$. But $|q_n(u_{s,n})|=|P(u_{1,n},u_{2,n},\cdots,u_{s-1,n}, u_{s,n})|$ is bounded. Hence, if $N>0$ we get a contradiction. It follows that $N=0$ and   $P(t_1,\cdots,t_s) =  A_{0}(t_1+\theta_1t_s,t_2+\theta_2t_s,\cdots,t_{s-1}+\theta_{s-1}t_s)$, with $A_0\in\Pi_{(s-1)m,\max}^{s-1}$.  \end{proof}
In the following $\Delta_{(\theta_1,\cdots,\theta_d)}^{m+1}f(x)$ denotes the usual 

 $$\Delta_{(\theta_1,\cdots,\theta_d)}^{m+1}f(x) = \sum_{k=0}^{m+1} \binom{m+1}{k}(-1)^{m+1-k}f(x+k(\theta_1,\cdots,\theta_d)).$$


\begin{corollary} \label{cor_impo}
Assume that $f:\mathbb{R}^d\to\mathbb{R}$ is bounded on a certain open set $U\subseteq\mathbb{R}^d$, $U\neq \emptyset $, and
$H=\mathbb{Z}^d+(\theta_1,\theta_2,\cdots,\theta_d)\mathbb{Z}$ is a dense subgroup of   $\mathbb{R}^d$. If $\beta=\{e_k\}_{k=1}^d$ denotes the canonical basis of $\mathbb{R}^d$ and $f$ satisfies
\[
\Delta_{e_k}^{m+1}f(x)= 0, \text{ for } k=1,\cdots, d ; \,\,\,\,\,  \Delta_{(\theta_1,\cdots,\theta_d)}^{m+1}f(x)= 0 ,
\]
and $P_{a,\gamma}$ denotes the polynomial constructed in Lemma \ref{MP_lem1}, for $\gamma=\beta \cup \{(\theta_1,\cdots,\theta_d)\}$, then
$$P_{a,\gamma}(t_1,\cdots,t_{d+1}) =  A(t_1+\theta_1t_{d+1},t_2+\theta_2t_{d+1},\cdots,t_{d}+\theta_dt_{d+1}),$$ with $A\in\Pi_{dm,\max}^{d}$.
 \end{corollary}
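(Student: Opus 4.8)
The plan is to apply Theorem~\ref{PoVV} to the polynomial $P_{a,\gamma}\in\Pi_{m,\max}^{d+1}$ with $s=d+1$, letting $\theta_1,\dots,\theta_d$ play the role of $\theta_1,\dots,\theta_{s-1}$. First I would record two preliminary facts. Since $\gamma=\{e_1,\dots,e_d,(\theta_1,\dots,\theta_d)\}$, we have $\sum_{k=1}^{d+1}i_kh_k=(i_1+\theta_1 i_{d+1},\dots,i_d+\theta_d i_{d+1})$, so Lemma~\ref{MP_lem1} gives
\[
P_{a,\gamma}(i_1,\dots,i_{d+1})=f\big(a+(i_1+\theta_1 i_{d+1},\dots,i_d+\theta_d i_{d+1})\big),\qquad (i_1,\dots,i_{d+1})\in\mathbb{Z}^{d+1}.
\]
Moreover, the density of $H=\mathbb{Z}^d+(\theta_1,\dots,\theta_d)\mathbb{Z}$ forces, via Corollary~\ref{Kro}, the set $\{1,\theta_1,\dots,\theta_d\}$ to be $\mathbb{Q}$-linearly independent; in particular each $\theta_j\neq 0$, so the standing hypothesis $\{\theta_1,\dots,\theta_d\}\subset\mathbb{R}\setminus\{0\}$ of Theorem~\ref{PoVV} is met.

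Next I would build the correct interpolation set $W$. Because $U$ is nonempty and open, so is $U-a$; choosing a small box inside $U-a$ and placing $dm+1$ distinct nodes along each edge yields a rectangular grid $W=\{x_0^1,\dots,x_{dm}^1\}\times\cdots\times\{x_0^d,\dots,x_{dm}^d\}\subseteq U-a$, which by tensor product interpolation is a correct interpolation set for $\Pi_{dm,\max}^d$. By construction $a+\alpha\in U$ for every $\alpha\in W$.

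The heart of the argument is to produce, for each $\alpha=(\alpha_1,\dots,\alpha_d)\in W$, integer vectors $(u_{1,n},\dots,u_{d+1,n})$ meeting conditions $(i)$--$(iii)$ of Theorem~\ref{PoVV}; the idea is to approximate $\alpha$ by elements of $H$ while forcing the multiplier $u_{d+1,n}$ of the $\theta$-direction to blow up. Passing to $\mathbb{T}^d=\mathbb{R}^d/\mathbb{Z}^d$, the $\mathbb{Q}$-linear independence of $\{1,\theta_1,\dots,\theta_d\}$ says $\bar\theta=(\theta_1,\dots,\theta_d)\bmod\mathbb{Z}^d$ has infinite order and generates a dense (hence infinite) subgroup $G_0=\{n\bar\theta:n\in\mathbb{Z}\}$. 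Being an infinite subset of the compact group $\mathbb{T}^d$, $G_0$ accumulates at some point, and taking differences shows $0$ is an accumulation point; since $\bar\theta$ has infinite order this produces integers $k_j$ with $|k_j|\to\infty$ and $k_j\bar\theta\to 0$. Combining a fixed $n_0$ with $n_0\bar\theta$ near $\bar\alpha=\alpha\bmod\mathbb{Z}^d$ with a diagonal choice of the $k_j$, I obtain integers $u_{d+1,n}$ with $u_{d+1,n}\bar\theta\to\bar\alpha$ and $|u_{d+1,n}|\to\infty$; then picking $u_{j,n}\in\mathbb{Z}$ as the representative making $u_{j,n}+\theta_j u_{d+1,n}$ the nearest lift of $u_{d+1,n}\theta_j$ to $\alpha_j$ gives $u_{j,n}+\theta_j u_{d+1,n}\to\alpha_j$, which is $(i)$, while $(ii)$ is immediate. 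This diophantine step---keeping $u_{j,n}+\theta_j u_{d+1,n}$ convergent while forcing $|u_{d+1,n}|\to\infty$---is the main obstacle, and it is precisely where the density of $H$ (equivalently, minimality of the rotation by $\bar\theta$) is used.

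Finally I would verify $(iii)$ and conclude. For each $n$ the point $a+(u_{1,n}+\theta_1 u_{d+1,n},\dots,u_{d,n}+\theta_d u_{d+1,n})$ converges to $a+\alpha\in U$; since $U$ is open it lies in $U$ for all large $n$, and there the displayed identity for $P_{a,\gamma}$ together with the boundedness of $f$ on $U$ gives $|P_{a,\gamma}(u_{1,n},\dots,u_{d+1,n})|\le\sup_{U}|f|$. Discarding finitely many terms, the sequence $\{P_{a,\gamma}(u_{1,n},\dots,u_{d+1,n})\}_{n}$ is bounded, establishing $(iii)$. Having checked $(i)$--$(iii)$ for every $\alpha$ in the correct interpolation set $W$, Theorem~\ref{PoVV} applied with $s=d+1$ yields
\[
P_{a,\gamma}(t_1,\dots,t_{d+1})=A(t_1+\theta_1 t_{d+1},\dots,t_d+\theta_d t_{d+1}),\qquad A\in\Pi_{dm,\max}^d,
\]
which is the assertion of the corollary.
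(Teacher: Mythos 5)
Your proof is correct, and its skeleton coincides with the paper's: both reduce the corollary to Theorem \ref{PoVV} (with $s=d+1$) by producing a correct interpolation set $W$ for $\Pi_{dm,\max}^{d}$ sitting inside (a translate of) $U$, invoking the identity $P_{a,\gamma}(i_1,\dots,i_{d+1})=f(a+\sum_k i_k h_k)$ valid on all of $\mathbb{Z}^{d+1}$ from Lemma \ref{MP_lem1}, and deriving condition $(iii)$ from the boundedness of $f$ on $U$. Where you genuinely diverge is in securing condition $(ii)$, that $|u_{d+1,n}|\to\infty$. The paper chooses the nodes of $W$ with \emph{rational} coordinates and argues indirectly: if $i_{k,n}+\theta_k i_{d+1,n}\to\alpha_k\in\mathbb{Q}$ while $i_{d+1,n}$ stays bounded, a constant subsequence $i_{d+1,n}=c$ forces $\theta_k=(\alpha_k-j_k)/c\in\mathbb{Q}$, contradicting the $\mathbb{Q}$-linear independence of $\{1,\theta_1,\dots,\theta_d\}$; note this inference needs $c\neq 0$, so as literally written the paper leaves open the degenerate possibility $c=0$ with $\alpha\in\mathbb{Z}^d$ (easily repaired by taking non-integer rational nodes or distinct approximants, but a genuine bookkeeping gap). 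You instead build the approximating sequence with $(ii)$ hardwired: compactness of $\mathbb{T}^d$ together with the infinite order of $\bar\theta=(\theta_1,\dots,\theta_d)\bmod\mathbb{Z}^d$ yields integers $k_j$ with $|k_j|\to\infty$ and $k_j\bar\theta\to 0$, and adding these to increasingly good approximants of $\bar\alpha$ gives $u_{d+1,n}\bar\theta\to\bar\alpha$ with $|u_{d+1,n}|\to\infty$, after which nearest-integer lifts deliver $(i)$. This buys you complete freedom in choosing $W$ (any grid in $U-a$, no rationality assumption) and eliminates the degenerate case; the paper's route is shorter once the rational-node convention is granted. Your handling of general $a$ directly via $W\subseteq U-a$, versus the paper's reduction to $a=0$ through $P_{a,\gamma}(f)=P_{\mathbf{0},\gamma}(g)$ with $g(x)=f(x+a)$, is an immaterial difference.
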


 \begin{proof}
There is no loss of generality if we assume that $a=0$. Indeed, if we  use the notation $P_{a,\gamma}(f)$ for the polynomial constructed in  Lemma \ref{MP_lem1} for the function $f$, and we take $g(x)=f(x+a)$, then it is clear that $P_{a,\gamma}(f)=P_{\mathbf{0},\gamma}(g)$.

Let $W\subseteq U$ be a correct interpolation set for $\Pi_{dm,\max}^{d}$ whose entries have rational coordinates (such set obviously exists since $U$ is open). Then, for every $\alpha=(\alpha_1,\cdots,\alpha_d)\in W$ there exists a sequence $\{(i_{1,n},\cdots,i_{d,n})+i_{d+1,n}(\theta_1,\cdots,\theta_d) \}_{n=1}^{\infty}$ which is contained in $U$ and satisfies
 \[
 \lim_{n\to\infty}(i_{1,n},\cdots,i_{d,n})+i_{d+1,n}(\theta_1,\cdots,\theta_d) =\alpha.
 \]
 The density of $H=\mathbb{Z}^d+(\theta_1,\theta_2,\cdots,\theta_d)\mathbb{Z}$ in   $\mathbb{R}^d$ implies that the numbers $\{1,\theta_1,\cdots,\theta_d\}$ form a linearly independent system  over $\mathbb{Q}$. In particular, $\theta_k$ is an irrational number for $k=1,\cdots,d$. From this, and from the convergence of $i_{k,n}+\theta_k i_{d+1,n}$ to $\alpha_k \in\mathbb{Q}$, it follows that  $|i_{d+1,n}|\to\infty$ for $n\to\infty$. On the other hand,
 \[
 P_{0,\gamma}(i_{1,n},\cdots,i_{d,n},i_{d+1,n}) = f((i_{1,n},\cdots,i_{d,n})+i_{d+1,n}(\theta_1,\cdots,\theta_d))
 \]
 is bounded. Thus, we can apply Theorem \ref{PoVV} to $P_{0,\gamma}$, completing the proof.
 \end{proof}

 Now we are ready to prove the main result of this section:

 \begin{theorem}[Montel-Popoviciu theorem for several variables]\label{M_T}
Assume that
$H=\mathbb{Z}^d+(\theta_1,\theta_2,\cdots,\theta_d)\mathbb{Z}$ is a dense subgroup of   $\mathbb{R}^d$. If $\beta=\{e_k\}_{k=1}^d$ denotes the canonical basis of $\mathbb{R}^d$ and $f:\mathbb{R}^d \to\mathbb{R}$ satisfies
\[
\Delta_{e_k}^{m+1}f(x)= 0, \text{ for } k=1,\cdots, d, \text{ and }\,\,\, \Delta_{(\theta_1,\cdots,\theta_d)}^{m+1}f(x)= 0,
\]
and is continuous at every point of a set  $W\subseteq\mathbb{R}^d$ which is a correct interpolation set for  $\Pi_{dm,\max}^{d}$,
then $f\in  \Pi_{m,\max}^{d}$.
 \end{theorem}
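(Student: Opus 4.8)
The plan is to reduce the statement to the structural description of the interpolating polynomials $P_{a,\gamma}$ already obtained in Corollary \ref{cor_impo}, and then to glue together the coset-wise polynomial descriptions of $f$ using the fact that $W$ is a correct interpolation set.

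First I would observe that continuity of $f$ at a single point of $W$ already forces $f$ to be bounded on a nonempty open set: if $f$ is continuous at $\alpha$, then $|f(x)-f(\alpha)|<1$, hence $|f(x)|<|f(\alpha)|+1$, on a neighbourhood of $\alpha$. Since $H=\mathbb{Z}^d+(\theta_1,\dots,\theta_d)\mathbb{Z}$ is dense and $f$ satisfies the two difference equations, Corollary \ref{cor_impo} then applies to $f$ with an arbitrary base point $a\in\mathbb{R}^d$ and yields
\[
P_{a,\gamma}(t_1,\dots,t_{d+1})=A_a(t_1+\theta_1t_{d+1},\dots,t_d+\theta_dt_{d+1}),\qquad A_a\in\Pi_{dm,\max}^d .
\]
Evaluating this identity at an integer point $(i_1,\dots,i_{d+1})\in\mathbb{Z}^{d+1}$ and invoking Lemma \ref{MP_lem1}, the left-hand side equals $f(a+z)$ where $z=(i_1,\dots,i_d)+i_{d+1}(\theta_1,\dots,\theta_d)\in H$ has $k$-th coordinate exactly $i_k+\theta_ki_{d+1}$. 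Hence, writing $Q_a(x):=A_a(x-a)\in\Pi_{dm,\max}^d$, I obtain $f(x)=Q_a(x)$ for every $x\in a+H$; that is, on each coset of $H$ the function $f$ coincides with a (coset-dependent) polynomial of degree $\le dm$ in each variable.

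The decisive step is to show that all these polynomials $Q_a$ are one and the same, so that $f$ is globally a polynomial. Fix $a\in\mathbb{R}^d$ and $\alpha\in W$. Since $a+H$ is dense, choose $w_n\in a+H$ with $w_n\to\alpha$; then $f(w_n)=Q_a(w_n)\to Q_a(\alpha)$ because $Q_a$ is continuous, while $f(w_n)\to f(\alpha)$ because $f$ is continuous at $\alpha$. Therefore $Q_a(\alpha)=f(\alpha)$ for every $\alpha\in W$, and the same computation with $a=0$ gives $Q_0(\alpha)=f(\alpha)$. Thus $Q_a$ and $Q_0$ are two elements of $\Pi_{dm,\max}^d$ agreeing with the data $(\alpha,f(\alpha))_{\alpha\in W}$; since $W$ is a correct interpolation set for $\Pi_{dm,\max}^d$, uniqueness forces $Q_a=Q_0$. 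As every $x\in\mathbb{R}^d$ lies in the coset $x+H$, I conclude $f(x)=Q_x(x)=Q_0(x)$ for all $x$, so $f=Q_0\in\Pi_{dm,\max}^d$ everywhere.

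It remains to sharpen the degree. Now that $f$ equals a genuine polynomial everywhere, the hypothesis $\Delta_{e_k}^{m+1}f=0$ becomes a polynomial identity; viewing $f$ as a polynomial in $x_k$ with the remaining variables frozen, the vanishing of the $(m+1)$-st forward difference in the $x_k$-direction forces its degree in $x_k$ to be $\le m$. Carrying this out for each $k=1,\dots,d$ gives $f\in\Pi_{m,\max}^d$, as desired. I expect the main obstacle to be precisely the gluing step: because $f$ is assumed continuous only on the finite set $W$ (and nowhere else a priori), one cannot pass from ``polynomial on each dense coset'' to ``polynomial everywhere'' by a naive continuity or closure argument; it is the correctness of $W$ as an interpolation set for $\Pi_{dm,\max}^d$, combined with continuity exactly at the points of $W$, that collapses the coset polynomials to a single one.
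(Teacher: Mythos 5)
Your proposal is correct and follows essentially the same route as the paper: Corollary \ref{cor_impo} plus Lemma \ref{MP_lem1} to get a coset-wise polynomial representation of $f$, then the gluing of the coset polynomials via continuity at the points of $W$ together with the correctness of $W$ for $\Pi_{dm,\max}^{d}$, and finally the degree sharpening from $dm$ to $m$ by applying $\Delta_{e_k}^{m+1}$ to the resulting polynomial. Your reparametrization $Q_a(x)=A_a(x-a)$ is in fact a slightly cleaner bookkeeping than the paper's identity \eqref{igualdad} (whose auxiliary polynomial $C$ is defined with a small notational slip), but the substance of the argument is identical.
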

 \begin{proof} We divide the proof into two parts. In the first one we prove that $f$ is an ordinary polynomial which belongs to $\Pi_{dm,\max}^{d}$. In the second part we improve the result by showing that $f\in   \Pi_{m,\max}^{d}$, which is a smaller space of polynomials.

 \noindent \textbf{Part I:} To show that $f\in  \Pi_{dm,\max}^{d}$ , note that continuity of $f$ at just one point implies that $f$  is bounded on a certain nonempty open set $U\subseteq\mathbb{R}^d$, so that we can apply Corollary \ref{cor_impo} to $f$. In particular,  for every $a=(a_1,\cdots,a_d)\in\mathbb{R}^d$, there exists $A_a\in \Pi_{dm,\max}^{d}$ satisfying
 \[
 A_a((n_1,\cdots,n_d)+n_{d+1}(\theta_1,\cdots,\theta_d))=f(a+(n_1,\cdots,n_d)+n_{d+1}(\theta_1,\cdots,\theta_d)) \text{ for all } (n_1,\cdots,n_{d+1})\in\mathbb{Z}^{d+1}.
 \]
 The result follows if we prove that, for any $a\in\mathbb{R}^d$, the relation
 \begin{equation}
 \label{igualdad} A_a(x)=A_{\mathbf{0}}(x-a)
\end{equation}
holds for all $x\in\mathbb{R}^d$ (here, $\mathbf{0}$ denotes the zero vector $(0,\cdots,0)\in\mathbb{R}^d$). To prove this, it is enough to take into account that, if \eqref{igualdad} holds true, then, for each $a\in\mathbb{R}^d$, we have that
\[
f(a)=A_a(\mathbf{0})=A_{\mathbf{0}}(-a).
\]
 In other words, $f(x)=A_{\mathbf{0}}(-x)\in \Pi_{dm,\max}^{d}$.

 Let us demonstrate the validity of \eqref{igualdad}. We fix $a\in\mathbb{R}^d$ and we define the polynomial $C(x)=A_{\mathbf{0}}(x-a)$. Let us show that $C=A_a$. Obviously, $C\in \Pi_{dm,\max}^d$. Assume that $f$ is continuous at every point of a set  $W\subseteq\mathbb{R}^d$ which is a correct interpolation set for  $\Pi_{dm,\max}^{d}$. Obviously, $C\in \Pi_{dm,\max}^d$, so that $C=A_a$ if and only if $C_{|W}=(A_a)_{|W}$. Take $\alpha\in W$. The density of $H$ in $\mathbb{R}^d$ implies that, for certain sequences of vectors $(i_{1,n},\cdots,i_{d+1,n}), (j_{1,n},\cdots,j_{d+1,n})\in\mathbb{Z}^{d+1}$, we will have that
 \begin{eqnarray*}
 \alpha&=& \lim_{n\to\infty} [(i_{1,n},\cdots,i_{d,n})+i_{d+1,n}(\theta_1,\cdots,\theta_d)]\\
 &=&  \lim_{n\to\infty} [a+(j_{1,n},\cdots,j_{d,n})+j_{d+1,n}(\theta_1,\cdots,\theta_d)].
 \end{eqnarray*}
 Hence, the continuity of $f$ at $\alpha$ implies that
  \begin{eqnarray*}
f( \alpha)&=& \lim_{n\to\infty} [f((i_{1,n},\cdots,i_{d,n})+i_{d+1,n}(\theta_1,\cdots,\theta_d))] \\
&=&  \lim_{n\to\infty} [A_{\mathbf{0}}((i_{1,n},\cdots,i_{d,n})+i_{d+1,n}(\theta_1,\cdots,\theta_d))] \\
&=& A_{\mathbf{0}}(\alpha)
\end{eqnarray*}
and
\begin{eqnarray*}
f(\alpha) &=&  \lim_{n\to\infty} [f(a+(j_{1,n},\cdots,j_{d,n})+j_{d+1,n}(\theta_1,\cdots,\theta_d))] \\
&=& \lim_{n\to\infty} [A_a(j_{1,n},\cdots,j_{d,n})+j_{d+1,n}(\theta_1,\cdots,\theta_d))] \\
&=& A_a(\alpha-a) =C(\alpha)
 \end{eqnarray*}
 It follows that $C(\alpha)=A_{\mathbf{0}}(\alpha)$ for all $\alpha\in W$. This concludes the proof that $f\in  \Pi_{dm,\max}^{d}$.

 \noindent \textbf{Part II:} Let us prove $f\in   \Pi_{m,\max}^{d}$. By Part I we know that $f\in  \Pi_{dm,\max}^{d}$. Thus, if we apply the set of equations
 \[
\Delta_{e_k}^{m+1}f(x)= 0, \text{ for } k=1,\cdots, d.
\]
to $f\in  \Pi_{dm,\max}^{d}$, we get that $f\in   \Pi_{m,\max}^{d}$. Indeed, if $f\in\Pi_{N,\max}^d$, then for every $k\in\{1,\cdots,d\}$ we can uniquely decompose $f$ as a sum
$f=\sum_{i=1}^N\phi_i(x_1,\cdots,x_{k-1},x_{k+1},\cdots,x_d)x_k^i$, with $\phi_1,\cdots,\phi_N$ polynomials in $d-1$ variables and $\phi_N\neq 0$. We must show that $N\leq m$. Now,
\[
\Delta_{e_k}^{m+1}f(x)= \sum_{i=0}^N \phi_i(x_1,\cdots,x_{k-1},x_{k+1},\cdots,x_d)\Delta_{1}^{m+1}x_k^i = \sum_{i=m+1}^N \phi_i(x_1,\cdots,x_{k-1},x_{k+1},\cdots,x_d)\Delta_{1}^{m+1}x_k^i,
\]
which is equal to zero if and only if $N\leq m$. This ends the proof for both parts.
  \end{proof}

  \begin{corollary}
 Let $H=h_1\mathbb{Z}+\cdots+h_s\mathbb{Z}$ be a finitely generated  subgroup of $\mathbb{R}^d$ and assume that
$\mathbb{Z}^d+(\theta_1,\theta_2,\cdots,\theta_d)\mathbb{Z} \subseteq H$ for certain system of real numbers $\{\theta_k\}_{k=1}^d$ such that  $\{1,\theta_1,\cdots,\theta_d\}$  is $\mathbb{Q}$-linearly independent.
 If  $f:\mathbb{R}^d \to\mathbb{R}$ satisfies
\[
\Delta_{h_k}^{m+1}f(x)= 0, \text{ for } k=1,\cdots, s
\]
and is continuous at every point of a set  $W\subseteq\mathbb{R}^d$ which is a correct interpolation set for  $\Pi_{dsm,\max}^{d}$,
then $f\in  \Pi_{sm,\max}^{d}$.
  \end{corollary}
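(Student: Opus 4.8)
The plan is to reduce the corollary to a single application of Theorem \ref{M_T} after replacing $m$ by $sm$. The starting observation is that the inclusion $\mathbb{Z}^d+(\theta_1,\cdots,\theta_d)\mathbb{Z}\subseteq H$ forces all the canonical basis vectors $e_1,\cdots,e_d$ (which lie in $\mathbb{Z}^d$) and the vector $(\theta_1,\cdots,\theta_d)$ to belong to $H=h_1\mathbb{Z}+\cdots+h_s\mathbb{Z}$. Moreover, since $\{1,\theta_1,\cdots,\theta_d\}$ is $\mathbb{Q}$-linearly independent, Kronecker's theorem (Corollary \ref{Kro}) guarantees that $\mathbb{Z}^d+(\theta_1,\cdots,\theta_d)\mathbb{Z}$ is already dense in $\mathbb{R}^d$, which is exactly the density hypothesis required by Theorem \ref{M_T}.

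First I would apply Corollary \ref{nuevo_montel_0} to $f$ with the generators $h_1,\cdots,h_s$. Since $f$ satisfies $\Delta_{h_k}^{m+1}f=0$ for $k=1,\cdots,s$, that corollary yields $\Delta_h^{sm+1}f=0$ for every $h\in H$. Specializing to the elements $h=e_k$ $(k=1,\cdots,d)$ and $h=(\theta_1,\cdots,\theta_d)$, all of which lie in $H$ by the previous paragraph, gives
\[
\Delta_{e_k}^{sm+1}f=0\ (k=1,\cdots,d),\qquad \Delta_{(\theta_1,\cdots,\theta_d)}^{sm+1}f=0.
\]
These are precisely the difference equations appearing in Theorem \ref{M_T}, but with the order $m+1$ raised to $sm+1$.

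Next I would check that the continuity hypothesis matches after the substitution $m\mapsto sm$. The corollary assumes $f$ continuous on a correct interpolation set $W$ for $\Pi_{dsm,\max}^d=\Pi_{d(sm),\max}^d$, which is exactly the continuity set demanded by Theorem \ref{M_T} when $m$ is replaced by $sm$. Thus every hypothesis of Theorem \ref{M_T} (with $sm$ in the role of $m$) is satisfied, and invoking it directly yields $f\in\Pi_{sm,\max}^d$, the desired conclusion.

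I do not expect a genuine obstacle here: the corollary is essentially a bookkeeping consequence of chaining Corollary \ref{nuevo_montel_0} (which lets us pass from the arbitrary generators $h_k$ to the distinguished generators $e_k$ and $(\theta_1,\cdots,\theta_d)$, at the cost of inflating the order to $sm+1$) with Theorem \ref{M_T}. The only point that requires a moment of care is the degree and order accounting, namely verifying that $d(sm)=dsm$ so that the correct interpolation set $W$ for $\Pi_{dsm,\max}^d$ aligns with the requirement of Theorem \ref{M_T} after the substitution, and noting that the density actually used is that of the subgroup $\mathbb{Z}^d+(\theta_1,\cdots,\theta_d)\mathbb{Z}$ rather than of the possibly larger group $H$ itself.
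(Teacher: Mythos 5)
Your proof is correct and follows essentially the same route as the paper: first obtain $\Delta_h^{sm+1}f=0$ for all $h\in H$, then feed the generators $e_1,\cdots,e_d$ and $(\theta_1,\cdots,\theta_d)$ into Theorem \ref{M_T} with $m$ replaced by $sm$. Your citation of Corollary \ref{nuevo_montel_0} is in fact the more accurate one, since the paper cites Corollary \ref{nuevo_montel}, whose everywhere-continuity hypothesis is not available here, while the conclusion actually used ($\Delta_h^{sm+1}f=0$ for $h\in H$, with no continuity assumed) is exactly that of Corollary \ref{nuevo_montel_0}.
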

\begin{proof}
Applying Corollary \ref{nuevo_montel} to $f$ we conclude that $\Delta_{h}^{sm+1}f=0$ for all $h\in H$. In particular, we can use Theorem \ref{M_T} with this function just substituting $m$ by $sm$.
\end{proof}

 \section{Optimality}

In this section we prove that Popoviciu's original theorem is optimal, and we consider the optimality of Theorem \ref{M_T}  in the several variables setting. Let us start with the case $d=1$. Consider the function
\[
f(x)=\left\{\begin{array}{cccc}
x(x-1)\cdots(x-(m-1)) & & x\in h_1\mathbb{Z}+h_2\mathbb{Z}\\
0 & & \text{otherwise}
\end{array}\right.,
\]
where $H=h_1\mathbb{Z}+h_2\mathbb{Z}$ is assumed to be dense in $\mathbb{R}$. If $x\in H$, then $\{x+kh_i\}_{k=0}^{m+1}\subseteq H$ for $i=1,2$, so that
$f_{|\{x+kh_i\}_{k=0}^{m+1}}=p_{|\{x+kh_i\}_{k=0}^{m+1}}$, where $p(x)=x(x-1)\cdots(x-(m-1))$. Thus, for $i\in\{1,2\}$,
\begin{eqnarray*}
\Delta_{h_i}^{m+1}f(x) &=& \sum_{k=0}^{m+1}\binom{m+1}{k}(-1)^{m+1-k}f(x+kh_i)\\
&=& \sum_{k=0}^{m+1}\binom{m+1}{k}(-1)^{m+1-k}p(x+kh_i)\\
&=& \Delta_{h_i}^{m+1}p(x)=0,
\end{eqnarray*}
since $p\in\Pi_m$. On the other hand, if $x\not\in H$, then $ \{x+kh_i\}_{k=0}^{m+1}\cap H=\emptyset$, so that $f_{|\{x+kh_i\}_{k=0}^{m+1}}=0$, and $\Delta_{h_i}^{m+1}f(x) =0$. This proves that $\Delta_{h_i}^{m+1}f(x) =0$ for all $x$, for $i=1,2$. Furthermore, $f$ is continuous at $m$ points and it is not an ordinary polynomial.

Let us now consider the case $d>1$.

Let $H=\mathbb{Z}^d+(\theta_1,\cdots,\theta_d)\mathbb{Z}$ be a dense subgroup of $\mathbb{R}^d$, and let us consider the function \\$F(x_1,\cdots,x_d)=F_1(x_1,\cdots,x_d)+\cdots+F_d(x_1,\cdots,x_d)$, where $F_i(x_1,\cdots,x_d)=g(x_i)$ and
\[
g(x)=\left\{\begin{array}{cccc}
x(x-1)\cdots(x-(m-1)) & & x\in \theta_1\mathbb{Z}+\theta_2\mathbb{Z}+\cdots +\theta_d\mathbb{Z}\\
0 & & \text{otherwise}
\end{array}\right..
\]

Let us compute $\Delta_{e_k}^{m+1}F$ and $\Delta_{(\theta_1,\cdots,\theta_d)}^{m+1}F$. First of all, it is easy to check that $\Delta_{e_k}^{m+1}F_i=0$ for $1\leq i,k\leq d$ and, hence, $\Delta_{e_k}^{m+1}F=\sum_{i=1}^d \Delta_{e_k}^{m+1}F_i=0$, for $k=1,\cdots,d$. On the other hand, if $x=(x_1,\cdots,x_d)\in\mathbb{R}^d$, then
\begin{eqnarray*}
\Delta_{(\theta_1,\cdots,\theta_d)}^{m+1}F(x) &=& \sum_{k=0}^{m+1}\binom{m+1}{k}(-1)^{m+1-k}F(x+k(\theta_1,\cdots,\theta_d))\\
&=& \sum_{k=0}^{m+1}\binom{m+1}{k}(-1)^{m+1-k}F(x_1+k\theta_1,\cdots,x_d+k\theta_d)\\
&=& \sum_{k=0}^{m+1}\binom{m+1}{k}(-1)^{m+1-k}\sum_{i=1}^dg(x_i+k\theta_i) \\
&=&\sum_{i=1}^d\left(\sum_{k=0}^{m+1}\binom{m+1}{k}(-1)^{m+1-k}g(x_i+k\theta_i) \right)\\
&=& \sum_{i=1}^d\Delta_{\theta_i}^{m+1}g(x_i).
\end{eqnarray*}
Now, if $t\in \Gamma=\theta_1\mathbb{Z}+\cdots+\theta_d\mathbb{Z}$, then $\{t+k\theta_i\}_{k=0}^{m+1}\subseteq \Gamma$, so that
$g_{|\{t+k\theta_i\}_{k=0}^{m+1}}=p_{|\{t+k\theta_i\}_{k=0}^{m+1}}$, where $p(t)=t(t-1)\cdots(t-(m-1))$, and $\Delta_{\theta_i}^{m+1}g(t)=0$. If $t\not\in \Gamma$,
then $\{t+k\theta_i\}_{k=0}^{m+1}\cap \Gamma=\emptyset$ and $\Delta_{\theta_i}^{m+1}g(t)=0$. This means that, for all $x=(x_1,\cdots,x_d)\in\mathbb{R}^d$,  $\Delta_{\theta_i}^{m+1}g(x_i)=0$. Hence $\Delta_{(\theta_1,\cdots,\theta_d)}^{m+1}F=0$.

Now $F$ is continuous at $W=\{(i_1,\cdots,i_d): 0\leq i_k\leq m-1,\text{ for all } 1\leq k\leq d\}$, which is a correct interpolation set for $\Pi_{m-1,\max}^d$. If we take into account that the conclusion of Theorem \ref{M_T} is that $f\in\Pi_{m,\max}^d$, it seems natural to claim that this example shows that  Theorem \ref{M_T} is optimal (or near optimal) also for $d>1$.


 \bibliographystyle{amsplain}

\bigskip

\footnotesize{
\noindent A. G. Aksoy,\\
Department of Mathematics. Claremont McKenna College,\\
Claremont, CA, 91711, USA.\\
{\tt aaksoy@cmc.edu}\\ \\
J. M. Almira,\\
Departamento de Matem\'{a}ticas. Universidad de Ja\'{e}n,\\
E.P.S. Linares, C/ Alfonso X el Sabio, 28,\\
23700 Linares (Ja\'{e}n) Spain.\\
{\tt jmalmira@ujaen.es}
}

\end{document}